
\documentclass[11pts]{article}
\usepackage{graphicx}
\usepackage{amsmath}
\usepackage{amssymb}
\usepackage{amsthm}
\usepackage[usenames,dvipsnames]{xcolor}
%
%


\newtheorem{theorem}{Theorem}[section]
\newtheorem{corollary}[theorem]{Corollary}
\newtheorem{lemma}[theorem]{Lemma}
\newtheorem{proposition}[theorem]{Proposition}
\newtheorem{definition}[theorem]{Definition}
\newtheorem{remark}[theorem]{Remark}

\def\gm{\gamma}
\def\Gm{\Gamma}
\def\AA{\mathbf{A}}

\def\RR{\mathbb{R}}

\newcommand{\median}{\mathcal M}
\newcommand{\support}{\mathcal S}

\newcommand{\model}{\mathcal M}

\newcommand{\obs}{\mathbf y}
\newcommand{\x}{\mathbf x}


\newcommand{\noise}{\mathbf e}


\newcommand{\VV}{\mathcal V}
\newcommand{\UU}{\mathcal U}
\newcommand{\EE}{\mathcal E}
\newcommand{\K}{\mathcal K}
\newcommand{\NN}{[N]}
\newcommand{\supp}{\text{supp}}
\newcommand{\ngroups}{M}
\newcommand{\GG}{\mathfrak G}
\newcommand{\G}{\mathcal G}







\newcommand{\bigO}{\mathcal O}

\newcommand{\Real}{\mathbb{R}}

\newcommand{\A}{\mathcal{A}}





%

\textwidth=6.25in
\textheight=9.0in
\hoffset=-2cm
\voffset=-2cm

\newcommand{\bitem}{\begin{itemize}}
\newcommand{\eitem}{\end{itemize}}


\DeclareMathOperator*{\argmin}{argmin}


\newcommand{\beqn}{\begin{equation}}
\newcommand{\eeqn}{\end{equation}}
\newcommand{\balign}{\begin{align}}
\newcommand{\ealign}{\end{align}}


\def \dim {N}      



\begin{document}

\title{\Large Model-based Sketching and Recovery with Expanders\thanks{A preliminary version of this paper appeared in Proceedings of SODA 2014 \cite{bah2014model}. The current version includes an extended introduction, some previously omitted proof material and various minor corrections. This work was supported in part by the European Commission under Grant MIRG-268398, ERC Future Proof,  SNF 200021-132548, SNF 200021-146750 and SNF CRSII2-147633.}}
\author{Bubacarr Bah, Luca Baldassarre, Volkan Cevher \thanks{Laboratory for Information and Inference Systems (LIONS), \'Ecole Polytechnique F\'ed\'erale de Lausanne (EPFL), Switzerland ({\tt \{bubacarr.bah, luca.baldassarre, volkan.cevher\}@epfl.ch}).}}
\date{}

\maketitle


\begin{abstract} \small\baselineskip=9pt 
Linear sketching and recovery of sparse vectors with randomly constructed sparse matrices has numerous applications in several areas, including compressive sensing, data stream computing, graph sketching, and combinatorial group testing. This paper considers the same problem with the added twist that the sparse coefficients of the unknown vector exhibit further correlations as determined by a known sparsity model. 
We prove that exploiting model-based sparsity in recovery provably reduces the sketch size without sacrificing recovery quality. 
In this context, we present the model-expander iterative hard thresholding algorithm for recovering model sparse signals from linear sketches obtained via sparse adjacency matrices of expander graphs with rigorous performance guarantees. 
The main computational cost of our algorithm depends on the difficulty of projecting onto the model-sparse set. For the tree and group-based sparsity models we describe in this paper, such projections can be obtained in linear time. 
Finally, we provide numerical experiments to illustrate the theoretical results in action.
\normalsize
\end{abstract}
 
\section{Introduction} \label{sec:intro}

\subsection{Problem overview and statement}

In recent years, we have witnessed the advent of a new paradigm of succinct approximate representations of signals. A signal $\x \in \RR^N$ can be represented by the {\em measurements} or {\em sketch} $\AA\x$, for $\AA \in \RR^{m\times N}$, where $m \ll N$. The sketching is linear and dimensionality reducing, however it can still retain most of the information about the signal. These representations have been successfully applied in such diverse domains as data streaming \cite{muthukrishnan2005data,indyk2007sketching}, compressing sensing \cite{donoho2006compressed,candes2006robust}, graph sketching \cite{ahn2012graph,gilbert2004compressing} and even to breaking privacy of databases via aggregate queries \cite{dwork2007price}.

The standard approach in linear sketching assumes that $\x$ is $k$-sparse, i.e., only $k$ of its $N$ components are nonzero. We then find conditions on the sketch matrix $\AA$ and the recovery algorithm $\Delta$ to obtain an estimate $\widehat{\x} = \Delta(\AA\x + {\bf e})$, where ${\bf e}$ measures the perturbation of the linear model, that yields an error guarantee such as: 
\begin{align}
\label{eqn:cs_error_bnd}
\|\widehat{\x} - \x\|_p \leq C_1 \sigma_k(\x)_q + C_2\|{\bf e}\|_p,
\end{align}
where $\sigma_k(\x)_q = \min_{\scriptsize k-\mbox{sparse} ~\x'}\|\x - \x'\|_q,$ for some constants $C_1, C_2>0$ and $1\leq q \leq p \leq 2 $. We denote this type of error guarantees as $\ell_p/\ell_q$.
It has been established in CS that a randomly generated $\AA$ achieves the optimal sketch length of $\bigO\left(k\log(N/k)\right)$ and an approximate reconstruction, with error guarantee such as in  \eqref{eqn:cs_error_bnd}, of the original signal is possible using an appropriate algorithm, such as $\Delta$. This leads to two different probabilistic models for which \eqref{eqn:cs_error_bnd} holds.
The {\em for all} case where a randomly generated $\AA$ and recovery algorithm $\Delta$ satisfy \eqref{eqn:cs_error_bnd} {\em for all} $\x\in\RR^N$ with high probability; and the {\em for each} case where a randomly generated $\AA$, independent of $\x$, and recovery algorithm $\Delta$ satisfy \eqref{eqn:cs_error_bnd} {\em for each} given $\x\in\RR^N$ with high probability \cite{berinde2008combining}. 

In many cases, additional prior knowledge on the signal is available and can be encoded in a structure that models the relationships between the signal's sparse coefficients. The acquisition and recovery of signals with such {\em a priori} known structures, like tree and block structures is referred to as model-based compressed sensing (CS) \cite{baraniuk2010model,indyk2013model}. Model-based CS has the advantage of reducing the minimum sketch length for which a variant of \eqref{eqn:cs_error_bnd} holds. However, model-based CS with random \textit{dense} matrices require large storage space and  can lead to implementation difficulties. To overcome this computational drawback, this work, similarly to \cite{indyk2013model}, focuses on sparse measurement matrices that have the so called $\ell_1$-norm restricted isometry property (to be defined later), designed for model-based CS, and referred to as model-expanders. 

The recent work of \cite{indyk2013model} is the first to address model-based CS with sparse measurement matrices and in the {\em for all} case. They show that model-expanders exist when the structures (models) considered are binary trees and non-overlapping block structures and they provided bounds for the sketch sizes. They also propose a recovery algorithm that has exponential time complexity with respect to the dimension of the signal. 

\subsection{Main results}

In this paper, we propose an efficient polynomial time algorithm for model-based CS when the measurement matrices are model-expanders for sparsity models endowed with a tractable projection operator. We show that the proposed algorithm has $\ell_1/\ell_1$ theoretical guarantees. Our theoretical results are supported with numerical experiments. In addition, this work builds on \cite{indyk2013model} to extend the class of models for which we can show existence of model-expanders  that yield sampling bounds similar to those in \cite{indyk2013model}. Note that our results are for the {\em for all} case.

Formally, in order to recover $k$-model-sparse signals (to be defined in Section \ref{sec:prelim}), we propose the algorithm: {\em Model-expander iterative hard thresholding (MEIHT)}, which is an adaptation of the {\em iterative hard thresholding for expanders (EIHT)} algorithm \cite{foucart2013mathematical}.  The runtime of MEIHT is determined by the complexity of the {\em projection} onto a given model. In this paper we consider two sparsity models endowed with linear time projections via dynamic programs, adapted from \cite{baldassarre2013group}. Therefore, for these models, MEIHT has linear time complexity. 

MEIHT yields a $k$-model-sparse approximation $\widehat{\x}$, with a  {\em for all}, $\ell_1/\ell_1$ error guarantee as follows:
\begin{equation}
\label{eqn:mcs_error_bnd}
\|\widehat{\x} - {\bf x}\|_1 \leq C_1\sigma_{\model_k}(\x)_1 + C_2\|{\bf e}\|_1 \; ,
\end{equation}
where $\sigma_{\model_k}(\x)_1 := \min_{\scriptsize k-\mbox{model-sparse} ~\x'}\|\x - \x'\|_1$, for some constants $C_1,C_2 > 0$. This is a modification of \eqref{eqn:cs_error_bnd} that ensures that the approximations belong to the model considered. 

We consider two categories of structured signals: (a) $k$-rooted connected signals which have their $k$ nonzero components lie in a rooted connected tree \cite{baraniuk1994signal, baraniuk2010model,indyk2013model};  and (b) $k$-group sparse signals whose support is contained in the union of $k$ overlapping groups \cite{jacob2009group,rao2012signal}. Note that $k$-block sparse signals whose support consists of $k$ equal size non-overlapping blocks \cite{yuan2006model, stojnic2009reconstruction, baraniuk2010model,indyk2013model} are a special case of (b). We show that our algorithm can efficiently recover signals from these models and our numerical results support this. In addition, we generalize the results of \cite{indyk2013model} to $D$-ary tree models for (a) and to overlapping groups, instead of only block sparse signals. 

Our key contribution is introducing the first linear time algorithm for these models that resolves a key bottleneck in the application of model-expanders. Another contribution is the generalization of the results of \cite{indyk2013model} to cover a broader class of models. This is also significant for applications, since for instance we can consider wavelets quad-trees instead of just binary wavelet trees. In the case of tree-sparse models, our results provide an extension to the partial answer \cite{indyk2013model} to an open problem in the {\em Bertinoro workshop open problem list} \cite{indyk2011open}. Our extension is twofold: $(i)$ we considered a wider class of models, from binary to $D$-ary tree-sparse models for $D\geq 2$; $(ii)$ we proposed an efficient polynomial time algorithm.

A hallmark of our approach is that the algorithm can be used also for sparsity models that do not necessary lead to a reduction in sketch size. However, the algorithm always returns a signal from the given sparsity model. In many applications, this may be more important than reducing the sketch size, because it allows to interpret the solution with respect to the chosen sparsity structure even in the presence of perturbations. 

\subsection{Related work}

It has been observed that many signals have an underlying structure that can be exploited in compressed sensing, see for example \cite{duarte2005joint, fornasier2008recovery, eldar2009robust, stojnic2009reconstruction}, just to cite a few. Model-based compressed sensing \cite{baraniuk2010model, indyk2013model} was the terminology coined to refer to compressed sensing that exploits the structure of the signal in order to obtain approximations that satisfy \eqref{eqn:mcs_error_bnd} from fewer measurements than $\bigO\left(k\log\left(N/k\right)\right)$. For instance using dense sketching matrices \cite{baraniuk2010model} shows that we can have sketch lengths of $\bigO(k)$ and $\bigO\left(kg + k\log(N/(kg))\right)$ for the rooted-connected tree and the block structure models respectively - where $g$ is the size of the blocks. Recently, \cite{indyk2013model} used sparse sketching matrices to obtain $\bigO\left(k\log\left(N/k\right)/\log\log\left(N/k\right)\right)$ and $\bigO\left(kg\log(N)/\log(kg)\right)$ for the same models considered in \cite{baraniuk2010model}. The former work provided $\ell_2/\ell_1$ guarantees in the for all case with recovery algorithms typically of time complexity $\bigO\left(N\log(N)\right)$; while the latter gave $\ell_1/\ell_1$ guarantees in the for all case with an exponential time recovery algorithm. 

It was shown in \cite{indyk2011k} that the $\ell_2/\ell_1$ guarantees provided in \cite{baraniuk2010model} for block and tree models can be expressed into the $\ell_1/\ell_1$ framework, but it yields a super-constant approximation factor $C = \mathbf{\Theta}\left( \log(N) \right)$. Consequently, this generated interest into derivations that will avoid such super-constant. In fact, this was formally posed as an open problem by Poitr Indyk\footnote{See Question 15: Sparse Recovery for Tree Models.} in the {\em Bertinoro workshop open problem list} \cite{indyk2011open}. The authors of \cite{indyk2013model} provided a partial answer for the case when the trees are binary, but could not come up with an efficient recovery algorithm. This work extends the above results to $D$-ary trees for $D\geq 2$ and it proposes an efficient polynomial time algorithm - MEIHT.

It is well known that sparse matrices have computational advantages over their dense counterparts. Berinde et al.\ in \cite{berinde2008combining} introduced sparse binary, non-zero mean, sensing matrices that are adjacency matrices of expander graphs and they showed such matrices have the $\ell_1$-norm restricted isometry property (RIP-1). A binary matrix $\AA$ with $d$ nonzeros per column has RIP-1 \cite{berinde2008combining} if for all $k\mbox{-sparse vectors } \x\in\RR^N$, it satisfies
\begin{equation}
\label{eqn:rip1}
(1-\delta_{k})d\|\x\|_1 \leq \|\AA\x\|_1 \leq d\|\x\|_1,
\end{equation}
where $\delta_k$ is the restricted isometry constant (RIC) of order $k$. It is the smallest such constant over all sets of size $k$.
These sparse matrices have low storage complexity since they have only $d$, a small fraction, nonzero entries per column and they lead to much faster matrix products. 

Sparse recovery algorithms, for the generic sparse recovery problem with sparse non-zero mean matrices and {\em for all} guarantees, include $\ell_1$-minimization \cite{berinde2008combining} and combinatorially greedy algorithms: Sparse Matching Pursuit (SMP) \cite{berinde2008practical} and Expander Recovery Algorithm \cite{jafarpour2009efficient} (see also \cite{gilbert2010sparse,bah2012vanishingly}). However, \cite{indyk2013model} is the first to propose an exponential time algorithm for model-sparse signals using model expanders --- which they called model-based RIP-1 matrices --- in the {\em for all} case. We improve on this work by proposing the first polynomial time algorithm for model-sparse signals using model expander sensing matrices, when the sparsity model is endowed with a tractable projection operator.

The proposed algorithm involves projecting a signal onto the considered model.
Projections onto group-based models and rooted connected subtrees have been recently considered in \cite{baldassarre2013group} and \cite{cartis2013exact}. In the former paper, the authors showed that projections onto group-based models are equivalent to solving the combinatorial Weighted Maximum Coverage problem. Interestingly, there exist classes of group-based models that admit tractable projections that can be computed via dynamic programming in $\bigO(Nk)$ time, where $k$ is the maximum number of allowed groups in the support of the solution \cite{baldassarre2013group}. In \cite{cartis2013exact}, another dynamic program for rooted connected subtree projections has been proposed that achieves a computational complexity of $\bigO(Nk)$. The projection step in our algorithm uses a modification of the results of \cite{baldassarre2013group}. 

Typically the adjacency matrices of expander graphs are constructed probabilistically \cite{bassalygo1973complexity} leading to sparse sketching matrices with the desired {\em optimal} number of rows, $m = \bigO\left(k\log(N/k)/\epsilon\right)$ for $\epsilon > 0$. Deterministic construction of these objects with optimal parameters (graph degree $d$ and $m$) is still an open challenge. The current best {\em sub-optimal} deterministic construction is due to \cite{guruswami2009unbalanced} with $m = \bigO\left(d^2k^{1+\alpha}\right)$ for $\alpha > 0$. The recent work of \cite{indyk2013model} on model-based RIP-1 matrices (model expanders) also used a probabilistic construction. As an extension of this work, we show that it is possible to probabilistically construct more general model expanders. It is conceivable that {\em model-optimal} deterministic construction of these model expanders is possible, but it is not the focus of this work. We remark that the first work on model-based sparse matrices \cite{price2011efficient} considered signal models similar to those in \cite{indyk2013model} and achieved $\ell_2/\ell_2$ error guarantees, but only for the {\em for each} case.

\paragraph{Outline}
In the next section we set our notation and give definitions needed in the sequel. In Section \ref{sec:algo}, we present our algorithm and analyze its error guarantees. 
In Section \ref{sec:mdlexpanders}, we extend the existence of model-based expanders for $D$-ary rooted connected tree models and group-based models with overlaps. We provide numerical evidence in Section \ref{sec:emperics} and some concluding considerations in Section \ref{sec:concln}.
Finally, Section \ref{sec:proof} contains detailed proofs of the main results (Theorem \ref{thm:MEIHT_conv} and Corollary \ref{cor:MEIHT_conv}). 

\section{Preliminaries} \label{sec:prelim}

\subsection{Notation}

Throughout this paper, we denote scalars by lowercase letters (e.g. $k$), vectors by lowercase boldface letter (e.g., ${\bf x}$), sets by uppercase calligraphic letters (e.g., $\mathcal{S}$) and matrices by uppercase boldface letter (e.g. ${\bf A}$).
We use the shorthand $[N] := \{1, \ldots, N\}$.
Given a set $\mathcal{S} \subseteq [N]$, we denote its complement by $\bar{\mathcal{S}} := [N] \setminus \mathcal{S}$ and use $\x_\mathcal{S}$ for the restriction of $\x \in \Real^N$ onto $\mathcal{S}$, i.e., $(\x_\mathcal{S})_i = \x_i$ if $i \in \mathcal{S}$ and $0$ otherwise.
We use $|\mathcal{S}|$ to denote the cardinality of a set $\mathcal{S}$.
We denote the set of $k$-sparse vectors as $\Sigma_k$.

We denote bipartite graphs by $(\UU,\VV,\EE)$ where $\UU$ and $\VV$ are the set of left and right nodes, respectively, and $\EE$ is the set of edges.
A bipartite graph is left-regular if each left node has the same number of edges.
We call this number the degree of the graph.
Given a bipartite graph $(\UU,\VV,\EE)$ and a set $\K \subseteq \UU$, we use $\Gamma(\K)$ to indicate the set of {\em neighbors} of $\K$, that is the right nodes that are connected to the nodes in $\K$.
The $\ell_p$ norm of a vector ${\bf x} \in \Real^N$ is defined as $\|{\bf x}\|_p := \left ( \sum_{i=1}^N x_i^p \right )^{1/p}$.

\subsection{Model-based expanders} \label{sec:dfns}

In this paper, we consider sketching operators that are adjacency matrices of particular bipartite graphs (i.e., {\em model-based expanders}) that capture the structure of signals that belong to some pre-specified models.
We start by defining lossless expanders and describing models of structured signals before we define model-based expanders.

\begin{definition}[Lossless Expander \cite{capalbo2002randomness}]
\label{def:llexpander}
Let a graph $G=\left( [N],[m],\EE \right)$ be a left-regular bipartite graph with $N$ left (variable) nodes, $m$ right (check) nodes, a set of edges $\EE$ and left degree $d$.
If, for some $\epsilon \in (0,1/2)$ and any $\K \subset [N]$ of size $|\K|\leq k$, we have $|\Gamma(\K)| \geq (1-\epsilon)d|\K|$, then $G$ is referred to as a {\em $(k,d,\epsilon)$-lossless expander graph}.
\end{definition}

The property $|\Gamma(\K)| \geq (1-\epsilon)d|\K|$ is termed the {\em expansion} (or $\epsilon$-expansion) property and we refer to $\epsilon$ as the expansion coefficient. Typically, lossless expanders have $\epsilon \ll 1$, for instance $\epsilon <1/4$ \cite{jafarpour2009efficient}.

Informally, by {\em model-based expanders}, we mean that the set $\K$ in Definition \ref{def:llexpander} has some structure, so that the expansion property must hold for a reduced number of index sets $\K$.
Specifically, we restrict $\K$ to two models: a hierarchical model, where the elements of $[N]$ are organized in a {\em tree}, and a group model, where the elements of $[N]$ are grouped in $M$ {\em overlapping groups}.

\begin{definition}[$k$-rooted connected subtree]
\label{def:tree}
Given a $D$-ary tree $\mathcal{T}$ for an integer $D\geq 2$, an index set $\K$ is a {\em $k$-rooted connected subtree} of $\mathcal{T}$,  if it contains at most $k$ elements from $\mathcal{T}$ and for each element in $\K$, all its ancestors with respect to $\mathcal{T}$ are also in $\K$. This implies that the root of $\mathcal{T}$ is always in $\K$.
\end{definition}

\begin{definition}[$k$-tree sparse model and set]
Given a $D$-ary tree $\mathcal{T}$ ($D \geq 2$), we define the collection of all $k$-rooted connected subtrees of $\mathcal{T}$ as the {\em $k$-tree sparse model} $\mathcal{T}_k$.
A set $\support\subseteq [N]$ is {\em $k$-tree sparse} (i.e., $\mathcal{T}_k$-sparse) if $\support \subseteq \K$ for some $\K\in\mathcal{T}_k$.
\end{definition}

The second model is based on groups of variables that should be selected together.

\begin{definition}[group structure]
A {\em group structure} $\GG = \{\G_1, \ldots, \G_\ngroups\}$ is a collection of $M$ index sets, named {\em groups}, with $\G_j \subseteq \NN$ and $|\G_j| = g_j$ for $ 1 \leq j \leq \ngroups$ and $\bigcup_{\G \in \mathfrak{G}} \G = \NN$.
\end{definition}

We say two groups $\G_i$ and $\G_j$ overlap if $\mathop{\G_i \cap \G_j \neq \emptyset}$.
Given a group structure $\GG$, it is possible to define a graph --- the {\em group-graph} --- whose nodes are the groups $\{\G_1, \ldots, \G_\ngroups\}$ and whose edges connect nodes $i$ and $j$ if $\G_i$ and $\G_j$ overlap.
If the group-graph induced by $\GG$ does not contain loops, we say that $\GG$ is a {\em loopless overlapping group} structure.
We consider this structure in the sequel, because it turns out that projecting onto such group models can be done in linear time.

\begin{definition}[$k$-group sparse model and set]
Given a group structure $\GG$, the {\em $k$-group sparse} model $\GG_k$ is defined as the collection of all sets that are the union of at most $k$ groups from $\GG$.
A set $\support \in [N]$ is {\em $k$-group sparse} (i.e., $\GG_k$-sparse) if $\support \subseteq \K$ for some $\K\in\GG_k$.
\end{definition}

We use $\mathcal{M}_k$ to jointly refer to the $\mathcal{T}_k$ and $\GG_k$ models and $\model = \{\model_1, \model_2, \ldots \}$ to represent the {\em model class}.
Next we define model-sparse vectors/signals.
\begin{definition}[model-sparse vectors]
\label{def:mdlsparse}
Given a model $\model_k$, a vector ${\bf x} \in \Real^N$ is {\em model-sparse}, denoted as $\model_k$-sparse, if $\supp({\bf x}) \subseteq \K$ for some $\K \in\model_k$.
\end{definition}

The following concept of model expander hinges on the previous definitions.
\begin{definition}[Model-expander]
\label{def:mdlexpander}
Let a graph $G=\left( [N],[m],\EE \right)$ be a left-regular bipartite graph with $N$ left (variable) nodes, $m$ right (check) nodes, a set of edges $\EE$ and left degree $d$.
If, for any $\epsilon \in (0,1/2)$ and any $\support \subseteq \K$ with $\K \in \mathcal{M}_k$, we have $|\Gamma(\support)| \geq (1-\epsilon)d|\support|$, then $G$ is called a {\em $(k,d,\epsilon)$-model expander graph}.
\end{definition}

We interchangeably refer to the adjacency matrices of model-expander graphs as model-expanders.
As noted in \cite{indyk2013model}, it is straightforward to see that model-expanders satisfy a variant of the RIP-1, \cite{berinde2008combining}. Precisely, we say a model-expander, $\AA$ satisfies {\em model} RIP-1 if for all $\model_k\mbox{-sparse vectors } \x\in\RR^N$
\begin{equation}
\label{eqn:mrip1}
(1-\delta_{\model_k})d\|\x\|_1 \leq \|A\x\|_1 \leq d\|\x\|_1.
\end{equation}
where $\delta_{\model_k}$ is the model RIC of order $k$.
Henceforth, we also refer to model-expander matrices as model RIP-1 (denoted as $\model_k^{\epsilon}$-RIP-1) matrices as in \cite{indyk2013model}.

\subsection{Motivation} \label{sec:app}

Block models have direct applications in graph sketching, group testing and micro-arrays data analysis. Furthermore, these models and overlapping groups models have been advocated for several applications that include detecting genetic pathways for diagnosis or prognosis of cancer \cite{jacob2009group}, background-subtraction in videos \cite{baraniuk2010model}, mental state decoding via neuroimaging \cite{gramfort2009improving}, to mention just a few.
Loopless pairwise overlapping groups models have been identified in \cite{baldassarre2013group} as a class of group models that allows to find group-sparse signal approximations in polynomial time. An example of such structure is given by defining groups over a wavelet tree containing a node and all its direct children. These groups will overlap pairwisely on only one element and the corresponding group graph does not contain loops. They can be used to approximate signals which have significant features at different locations and at different scales.

On the other hand, rooted connected tree models arise in many applications ranging from image compression and denoising via wavelet coefficients to bioinformatics, where one observes  hierarchical organization of gene networks, to deep learning architectures with hierarchies of latent variables, to topic models that naturally present a hierarchy (or hierarchies) of topics (see, for example, \cite{jenatton2011proximal} for a more extensive list of applications and references). Another application of the rooted connected tree models is in Earth Mover Distance (EMD) sparse recovery \cite{indyk2011k}, where the pyramid transform can be used to convert the EMD recovery to a rooted connected sparse recovery with sparse binary matrices. Furthermore, recovery schemes with $\ell_1/\ell_1$ guarantees work well for EMD sparse recovery applications for both binary tree-sparse and wavelet quad-tree-sparse models. A typical application of EMD is in astronomical imaging of sparse point clouds where EMD sparse approximation schemes will approximately identify locations and weights of the clouds.

\section{Model based recovery with expanders} \label{sec:algo}

\subsection{Algorithm overview} \label{algover}

The model-expander iterative hard thresholding (MEIHT) algorithm proposed is a hybrid adaptation of the {\em expander iterative hard thresholding} (EIHT) algorithm proposed in \cite{foucart2013mathematical} (which is a modification of the {\em sparse matching pursuit} (SMP) algorithm \cite{berinde2008practical}) and projections onto the considered models using the results in \cite{baldassarre2013group} and \cite{cartis2013exact}. Let the compressive measurements be given as $\obs = \AA \x + \noise$ for a signal of interest $\x\in\RR^N$ where the noise vector $\noise \in \RR^m$ characterizes the measurement error. Typically initializing ${\bf x}^0 = {\bf 0} \in \RR^N$, the SMP iterates the following 
\begin{equation}
\label{eqn:SMP}
{\bf x}^{n+1} = H_k \big{\{} {\bf x}^n + H_{2k}\left[\mathcal{M} \left( \obs - {\bf Ax}^n \right) \right]\big{\}}
\end{equation}
where $H_s({\bf u})$ is the hard thresholding operator which retains the $s$ largest in magnitude components of ${\bf u}$ and sets the rest to zero and $\mathcal{M}$ is the {\em median operator} which is nonlinear and defined componentwise on a vector, say ${\bf u}\in \RR^m$, as follows
\begin{equation}
\label{eqn:median}
\left[\mathcal{M}({\bf u})\right]_i := \mbox{median}\left[u_j, j \in \Gamma(i) \right] ~ \mbox{for} ~ i\in [N].
\end{equation}
SMP was modified to become the {\em expander iterative hard thresholding} (EIHT) algorithm in \cite{foucart2013mathematical}. Precisely, the EIHT usually initializes ${\bf x}^0 = {\bf 0} \in \RR^N$ and iterates
\begin{equation}
\label{eqn:EIHT}
{\bf x}^{n+1} = H_k \left[ {\bf x}^n + \mathcal{M} \left( \obs - {\bf Ax}^n \right) \right].
\end{equation}

The stability and robustness to noise of EIHT was shown in \cite{foucart2013mathematical}. We adapt EIHT and call it the {\em model expander iterative hard thresholding} (MEIHT) algorithm for the purpose of sparse recovery with model based expanders.  In MEIHT, we replace the hard thresholding operator, $H_s({\bf b})$, by the projector $\mathcal{P}_{\model_s}({\bf b})$, which projects the vector ${\bf b}$ onto our model of size $s$, $\model_s$, with respect to the $\ell_1$ norm (see Section \ref{sec:project} for more details). The pseudo-code of MEIHT is given in Algorithm 1 below. 
Note that the proposed algorithm always returns a signal from the given sparsity model.
\begin{table}[h]
\centering
\begin{tabular}{l}
\hline
 \textbf{Algorithm 1} Model-Expander IHT (MEIHT)\\
 \hline
 \textbf{Input:} Data: ${\bf A}$ and $\obs$ and a bound of $\|\noise\|_1$\\
 \textbf{Output:} $k$-sparse approximation $\widehat{\x}$\\
 \hline
 \textbf{Initialization:}\\
    \quad Set ${\bf x}^0=0$ \\
 \textbf{Iteration:} While some {\em stopping criterion} is not true \\
    \quad Compute ${\bf x}^{n+1} = \mathcal{P}_{\model_k} \left[ {\bf x}^n + \mathcal{M} \left( \obs - {\bf Ax}^n \right) \right]$ \\
 \textbf{Return} $\widehat{\x} = {\bf x}^{n+1}$ \\
 \hline
\end{tabular}
\label{tab:MEIHT_pcode}
\end{table}

\begin{remark}
SMP, EIHT and MEIHT could be regarded as projected gradient descent algorithms \cite{jorge1999numerical}. This is because the median operator behaves approximately like the adjoint of $\AA$ \cite{foucart2013mathematical,berinde2008practical}. Essentially, these algorithms do a gradient descent by applying the median and then project onto the feasible set using the hard threshold operator or the projector onto the model.
\end{remark}

\subsection{Runtime complexity of MEIHT} \label{sec:comp}
The complexity of MEIHT depends on the cost of the median operation and the cost of the projection onto a sparsity  model. It turns out that in the settings with model expander measurement matrices the projection on the models is costlier than the median operation. Therefore, for the models we consider, the runtime complexity of MEIHT is linear. A detailed discussion on the projection step and its cost is given in Section \ref{sec:project}. The following proposition formally states the complexity of MEIHT for the models considered.
\begin{proposition}
\label{pro:comp}
The runtime of MEIHT is $\bigO\left(knN\right)$ and $\bigO\left(M^2kn + nN\right)$ for the models $\mathcal{T}_k$ and $\GG_k$ respectively, where $k$ is the model order, $n$ is the number of iterations, $M$ is the number of groups and $N$ is the dimension of the signal.
\end{proposition}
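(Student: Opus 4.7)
The plan is to upper bound the cost of a single iteration of MEIHT and then multiply by the iteration count $n$. Writing out the update
\[
\x^{n+1} = \mathcal{P}_{\model_k}\!\left[\x^n + \median\!\left(\obs - \AA\x^n\right)\right],
\]
I identify four elementary steps whose cost I will bound in turn: (a) the sparse matrix-vector product and residual formation $\obs - \AA\x^n$; (b) the componentwise median $\median(\cdot)$; (c) the vector addition with $\x^n$; and (d) the model projection $\mathcal{P}_{\model_k}(\cdot)$.

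For (a), since $\AA$ is the adjacency matrix of a left-regular bipartite graph of left degree $d$, it carries exactly $Nd$ nonzero entries, so forming $\AA\x^n$ by iterating over nonzeros and subtracting from $\obs$ takes $O(Nd)$ time. For (b), the definition $[\median(\bu)]_i = \mathrm{median}\{u_j : j \in \Gamma(i)\}$ requires, for each of the $N$ left nodes, a median over $d$ values; using a linear-time selection routine (e.g.\ median-of-medians), each such median costs $O(d)$, giving $O(Nd)$ in total. Step (c) is clearly $O(N)$. Steps (a)–(c) therefore combine to $O(Nd)$, a contribution that will be absorbed into the projection cost term appearing in (d), since $d$ is small compared to the projection complexity for both models of interest.

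For step (d), I would directly invoke the projection complexities described in Section \ref{sec:project}, which builds on the dynamic programs of \cite{cartis2013exact} for the rooted connected subtree model and of \cite{baldassarre2013group} for the loopless overlapping group model. These give projection times $O(Nk)$ for $\mathcal{T}_k$ and $O(M^2 k + N)$ for $\GG_k$. Summing the per-iteration costs of (a)–(d) yields $O(Nd + Nk) = O(Nk)$ in the tree case (using $d \leq k$, which holds in the regime of interest since typically $d = O(\log(N/k))$) and $O(Nd + M^2 k + N) = O(M^2 k + N)$ in the group case. Multiplying by $n$ iterations produces the two claimed bounds $O(knN)$ and $O(M^2 k n + nN)$.

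The main delicacy — more a bookkeeping matter than a genuine obstacle — is to confirm that the $\ell_1$-optimal model projection required by MEIHT is exactly what the cited dynamic programs compute, and to justify the absorption of the $O(Nd)$ terms from (a)–(b) into the projection cost. Both points are handled by the exposition of Section \ref{sec:project} on the projection step, after which the proposition follows by straightforward addition of the per-step bounds.
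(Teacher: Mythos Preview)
Your proposal is correct and follows essentially the same approach as the paper: bound the per-iteration cost by the median computation plus the model projection, observe that the projection dominates because $d$ is small relative to the projection complexities, and multiply by $n$. The only cosmetic differences are that you additionally account for the sparse matrix-vector product (also $O(Nd)$) and use a linear-time selection for the median rather than the paper's $O(d\log d)$ sorting, neither of which changes the final bound.
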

Recall that $k$ being the model order means that signals in $\mathcal{T}_k$ are $k$-sparse while signals in $\GG_k$ are $k$-group sparse. It is clear from the proposition that the runtime is linear in $N$ for both models.

\begin{proof}
The algorithm has linear convergence (see below) and hence a fixed, say $n$, number of iterations are sufficient to achieve an approximate solution.
At each iteration MEIHT performs $N$ median operations of $d$ elements which involve sorting with $\bigO\left(d\log d\right)$ complexity and a projection step that costs $\bigO\left(kN\right)$ for the $\mathcal{T}_k$ model and $\bigO\left(M^2k + N\right)$ for the $\GG_k$ model. The projection step dominates since  $d$ is small, to be precise $d = \bigO\left(\log(N)/\log\log(N/k)\right)$ and $d = \bigO\left(\log(N)/\log\left(kg_{\max}\right)\right)$ for the former and latter models respectively, see Section \ref{sec:mdlexpanders}. 
Therefore the runtime of MEIHT is a product of the number of iterations and the complexity of the projections. 
\end{proof}

\subsection{Convergence of MEIHT} \label{sec:conv}
The convergence analysis of MEIHT follows similar arguments as the proof of the linear time convergence of EIHT given in \cite{foucart2013mathematical}. The key difference between the analysis of EIHT and MEIHT is the projections $H_k(\cdot)$ and $\mathcal{P}_{\model_k}(\cdot)$ respectively. We leverage and adapt recent results in \cite{baldassarre2013group} to perform the projection $\mathcal{P}_{\model_k}(\cdot)$ exactly and efficiently in linear time. The projections are analysed in Section  \ref{sec:project}. Theorem \ref{thm:MEIHT_conv} and Corollary \ref{cor:MEIHT_conv} below bound the error of the output of the MEIHT algorithm.

\begin{theorem}
\label{thm:MEIHT_conv}
Let $\mathcal{M}_k$ be a model and let $\mathcal{S}$ be $\mathcal{M}_k$-sparse. Consider ${\bf A} \in \{0,1\}^{m\times N}$ to be the adjacency matrix of a model-expander with $\epsilon_{\model_{3k}} < 1/12$. For any ${\bf x} \in \RR^N$ and ${\bf e} \in \RR^m$, the sequence of updates $\left({\bf x}^n\right)$ of {\em MEIHT} with $\obs = {\bf Ax} + {\bf e}$ satisfies, for any $n\geq 0$,
\begin{equation}
\label{eqn:MEIHT_error}
 \|{\bf x}^n - {\bf x}_{\mathcal{S}}\|_1 \leq \alpha^n\|{\bf x}^0 - {\bf x}_{\mathcal{S}}\|_1 + \left(1-\alpha^n\right)\beta\|{\bf Ax}_{\bar{\mathcal{S}}} + {\bf e}\|_1,
\end{equation}
where $\alpha < 1$ and $\beta$ depends only on $\epsilon_{\model_{3k}}$. 
\end{theorem}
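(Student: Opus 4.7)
The plan is to mimic the EIHT linear-convergence argument of \cite{foucart2013mathematical}, with two adjustments: replacing the best $k$-term approximation bound by the analogous property of the model projector $\mathcal P_{\model_k}$, and replacing the generic expander/median estimate by its model-expander counterpart on the enlarged model $\model_{3k}$. Set
\[
\mathbf u^n \;:=\; \x^n + \mathcal M\!\left(\obs - \AA \x^n\right),
\qquad \text{so that }\ \x^{n+1}=\mathcal P_{\model_k}(\mathbf u^n).
\]
Since $\x_{\mathcal S}$ is $\model_k$-sparse and $\mathcal P_{\model_k}$ returns the best $\ell_1$ approximation in $\model_k$, I get $\|\mathbf u^n-\x^{n+1}\|_1\le \|\mathbf u^n-\x_{\mathcal S}\|_1$, and then by the triangle inequality
\[
\|\x^{n+1}-\x_{\mathcal S}\|_1 \;\le\; 2\,\|\mathbf u^n-\x_{\mathcal S}\|_1.
\]

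The next step is to rewrite the residual inside the median. Using $\obs=\AA\x_{\mathcal S}+\AA\x_{\bar{\mathcal S}}+\mathbf e$,
\[
\mathbf u^n-\x_{\mathcal S}
\;=\;(\x^n-\x_{\mathcal S})+\mathcal M\!\bigl(\AA(\x_{\mathcal S}-\x^n)+\AA\x_{\bar{\mathcal S}}+\mathbf e\bigr).
\]
Here $\mathbf v := \x_{\mathcal S}-\x^n$ has support contained in the union of two $\model_k$-sets, and after the subsequent projection onto $\model_k$ the combined support of the vectors appearing in the analysis is covered by an $\model_{3k}$-set. Hence the model RIP-1 of order $3k$ with constant $\epsilon_{\model_{3k}}$ is exactly what is needed to apply the key median-inversion lemma (the model-expander analogue of Lemma 13.11 in \cite{foucart2013mathematical}): for any such $\mathbf v$ and any $\mathbf w\in\RR^m$,
\[
\bigl\|\mathcal M(\AA\mathbf v+\mathbf w)-\mathbf v\bigr\|_1
\;\le\; \rho(\epsilon_{\model_{3k}})\,\|\mathbf v\|_1 \;+\; \frac{\tau(\epsilon_{\model_{3k}})}{d}\,\|\mathbf w\|_1,
\]
with $\rho,\tau$ explicit increasing functions of $\epsilon_{\model_{3k}}$. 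Plugging this in with $\mathbf v=\x_{\mathcal S}-\x^n$ and $\mathbf w=\AA\x_{\bar{\mathcal S}}+\mathbf e$, I obtain
\[
\|\mathbf u^n-\x_{\mathcal S}\|_1 \;\le\; \rho(\epsilon_{\model_{3k}})\,\|\x^n-\x_{\mathcal S}\|_1 \;+\; \frac{\tau(\epsilon_{\model_{3k}})}{d}\,\|\AA\x_{\bar{\mathcal S}}+\mathbf e\|_1.
\]

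Combining with the factor of $2$ from the projection step gives the one-step recursion
\[
\|\x^{n+1}-\x_{\mathcal S}\|_1 \;\le\; \alpha\,\|\x^n-\x_{\mathcal S}\|_1 + \gamma\,\|\AA\x_{\bar{\mathcal S}}+\mathbf e\|_1,
\]
where $\alpha := 2\rho(\epsilon_{\model_{3k}})$ and $\gamma := 2\tau(\epsilon_{\model_{3k}})/d$. The assumption $\epsilon_{\model_{3k}}<1/12$ is precisely what is needed to force $\alpha<1$. A routine geometric-series iteration of the recursion then yields
\[
\|\x^n-\x_{\mathcal S}\|_1 \;\le\; \alpha^n \|\x^0-\x_{\mathcal S}\|_1 + \frac{1-\alpha^n}{1-\alpha}\,\gamma\,\|\AA\x_{\bar{\mathcal S}}+\mathbf e\|_1,
\]
which is the claimed bound with $\beta := \gamma/(1-\alpha)$.

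The main obstacle is the model-expander median lemma: I must verify that the classical median argument (which relies on counting colliding edges through the expansion property) goes through verbatim once the support union of the vectors involved is shown to live in an $\model_{3k}$-set, so that $\epsilon_{\model_{3k}}$ is the only expansion constant appearing. The threshold $1/12$ (rather than $1/6$) for the standard EIHT bound stems from the precise form of $\rho$ when the expansion is only guaranteed on structured supports, and will be extracted from tracking constants in this median lemma.
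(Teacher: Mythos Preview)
Your overall architecture matches the paper's: use the $\ell_1$ best-approximation property of $\mathcal P_{\model_k}$ to get the factor $2$, rewrite the residual, invoke a median-inversion lemma at model order $3k$, and iterate. The constants you would extract ($\alpha=8\epsilon_{\model_{3k}}/(1-4\epsilon_{\model_{3k}})$, $\beta=4/((1-12\epsilon_{\model_{3k}})d)$) are exactly the paper's.

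There is, however, a genuine gap in the form of the median lemma you invoke. You state it as a bound on the \emph{full} vector,
\[
\bigl\|\mathcal M(\AA\mathbf v+\mathbf w)-\mathbf v\bigr\|_1 \le \rho\,\|\mathbf v\|_1+\tfrac{\tau}{d}\|\mathbf w\|_1,
\]
but this is false: take $\mathbf v=0$ and $\mathbf w=c\mathbf 1$, so that every coordinate of $\mathcal M(\mathbf w)$ equals $c$ and the left side is $Nc$, which is not controlled by $\|\mathbf w\|_1=mc$ once $N\gg m$. The correct lemma (the paper's Lemma~\ref{lem:median}, the model analogue of Foucart--Rauhut's Lemma~13.16) only bounds the \emph{restriction} $\bigl\|[\mathcal M(\AA\mathbf v+\mathbf w)-\mathbf v]_{\mathcal S}\bigr\|_1$ to an $\model_s$-sparse index set $\mathcal S$.

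Consequently, you cannot bound $\|\mathbf u^n-\x_{\mathcal S}\|_1$ directly. The missing step is to first pass to the set $\mathcal Q^{n+1}:=\mathcal S\cup\supp(\x^n)\cup\supp(\x^{n+1})$: since both $\x^{n+1}$ and $\x_{\mathcal S}$ are supported in $\mathcal Q^{n+1}$, the projection inequality and triangle inequality give $\|\x^{n+1}-\x_{\mathcal S}\|_1\le 2\|(\mathbf u^n-\x_{\mathcal S})_{\mathcal Q^{n+1}}\|_1$, and \emph{now} the restricted median lemma applies on $\mathcal Q^{n+1}$. It is precisely here that the nestedness property is used (to ensure $\mathcal Q^{n+1}\in\model_{3k}$), not merely to say that $\supp(\mathbf v)$ lies in an $\model_{2k}$-set as your sketch suggests. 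Once you insert this restriction, the rest of your argument goes through verbatim.
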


\begin{corollary}
\label{cor:MEIHT_conv}
If the sequence of updates $\left({\bf x}^n\right)$ converges to $\widehat{\x}$ as $n\rightarrow \infty$, then 
\begin{equation}
\label{eqn:MEIHT_error_cor}
\|\widehat{\x} - {\bf x}\|_1 \leq C_1\sigma_{\model_k}(\x)_1 + C_2\|{\bf e}\|_1,
\end{equation}
for some constants $C_1,C_2>0$ depending only on $\epsilon_{\model_{3k}}$.
\end{corollary}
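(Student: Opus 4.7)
The corollary is essentially the limiting form of Theorem \ref{thm:MEIHT_conv} combined with the best-$\mathcal{M}_k$-sparse-approximation argument, so the plan is short but has a few subtleties worth flagging.

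First I would fix an arbitrary $\mathcal{M}_k$-sparse set $\mathcal{S} \subseteq [N]$ and let $n \to \infty$ in the bound \eqref{eqn:MEIHT_error}. Since $\alpha < 1$ and $\x^n \to \widehat{\x}$ by hypothesis, continuity of $\|\cdot\|_1$ yields
\begin{equation*}
\|\widehat{\x} - \x_{\mathcal{S}}\|_1 \;\leq\; \beta \, \|\AA \x_{\bar{\mathcal{S}}} + \noise\|_1.
\end{equation*}
A triangle inequality on the right gives $\|\widehat{\x} - \x_{\mathcal{S}}\|_1 \leq \beta \|\AA \x_{\bar{\mathcal{S}}}\|_1 + \beta \|\noise\|_1$.

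Next I would pass from $\x_{\mathcal{S}}$ back to $\x$ via another triangle inequality,
\begin{equation*}
\|\widehat{\x} - \x\|_1 \;\leq\; \|\widehat{\x} - \x_{\mathcal{S}}\|_1 + \|\x_{\bar{\mathcal{S}}}\|_1,
\end{equation*}
so that I am left with the task of bounding $\|\AA \x_{\bar{\mathcal{S}}}\|_1$ in terms of $\|\x_{\bar{\mathcal{S}}}\|_1$. This is the one step where care is needed, because $\x_{\bar{\mathcal{S}}}$ is the \emph{tail} of $\x$ and is not itself model-sparse, so the model-RIP-1 lower bound \eqref{eqn:mrip1} does not apply. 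However, the upper bound $\|\AA \u\|_1 \leq d\|\u\|_1$ holds for \emph{every} $\u \in \RR^N$ simply because $\AA \in \{0,1\}^{m \times N}$ has exactly $d$ ones in each column: expanding $\|\AA \u\|_1 = \sum_i |\sum_j A_{ij} u_j| \leq \sum_j |u_j| \sum_i A_{ij} = d\|\u\|_1$. Applying this to $\u = \x_{\bar{\mathcal{S}}}$ gives $\|\AA \x_{\bar{\mathcal{S}}}\|_1 \leq d \|\x_{\bar{\mathcal{S}}}\|_1$, hence
\begin{equation*}
\|\widehat{\x} - \x\|_1 \;\leq\; (1 + \beta d)\, \|\x_{\bar{\mathcal{S}}}\|_1 + \beta \|\noise\|_1.
\end{equation*}

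Finally, the bound holds for every $\mathcal{M}_k$-sparse $\mathcal{S}$, so I would minimize the right-hand side over such $\mathcal{S}$. Choosing $\mathcal{S}$ to be the support of a best $\mathcal{M}_k$-sparse approximation of $\x$ (in $\ell_1$) makes $\|\x_{\bar{\mathcal{S}}}\|_1 = \sigma_{\model_k}(\x)_1$, giving the claimed inequality with $C_1 = 1 + \beta d$ and $C_2 = \beta$. Since $\beta$ depends only on $\epsilon_{\model_{3k}}$ by Theorem \ref{thm:MEIHT_conv}, and $d$ can be absorbed into the expansion-dependent constant (or viewed as a design parameter tied to $\epsilon_{\model_{3k}}$ through the expander construction), the conclusion matches the corollary's statement. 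The only non-routine point is the aforementioned handling of $\|\AA\x_{\bar{\mathcal{S}}}\|_1$ on the non-sparse tail; everything else is a two-line application of the triangle inequality and a limit.
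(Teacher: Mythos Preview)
Your proof is correct and follows essentially the same route as the paper: take the limit $n\to\infty$ in \eqref{eqn:MEIHT_error}, split off $\|\x_{\bar{\mathcal{S}}}\|_1$ via the triangle inequality, use $\|\AA\u\|_1\le d\|\u\|_1$ for arbitrary $\u$, and arrive at $C_1=1+\beta d$, $C_2=\beta$. Your worry about $d$ appearing in $C_1$ is unfounded: from the proof of Theorem~\ref{thm:MEIHT_conv} one has $\beta=\tfrac{4}{(1-12\epsilon_{\model_{3k}})d}$, so $\beta d=\tfrac{4}{1-12\epsilon_{\model_{3k}}}$ and $C_1$ indeed depends only on $\epsilon_{\model_{3k}}$.
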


\noindent \textbf{Sketch proof of Theorem \ref{thm:MEIHT_conv} \& Corollary \ref{cor:MEIHT_conv}}
The first step of the proof consists in bounding $\|\left[ \mathcal{M} \left({\bf Ax}_{\mathcal{S}} + {\bf e}\right) - {\bf x} \right]_{\mathcal{S}}\|_1$. 
This is done inductively over the set $\mathcal{S}$ and key ingredients to this proof are properties of the median operator and the expansion property of expander graphs.
We obtain that 
\begin{equation}
\label{eqn:lem_median0}
 \|\left[ \mathcal{M} \left({\bf Ax}_{\mathcal{S}} + {\bf e}\right) - {\bf x} \right]_{\mathcal{S}}\|_1 \leq \frac{4\epsilon_{\model_{s}}}{1-4\epsilon_{\model_{s}}}\|{\bf x}_{\mathcal{S}}\|_1  + \frac{2}{(1-4\epsilon_{\model_{s}}) d}\|{\bf e}_{\Gamma(\mathcal{S})}\|_1,
\end{equation}
where $\mathcal{S} \in \mathcal{M}_s$-sparse for $|\mathcal{S}|\leq s$, $\epsilon_{\model_{s}}$ is the expansion coefficient of our model and $\Gamma(\mathcal{S})$ is the set of neighbours of $\mathcal{S}$. 
Now, assuming exact projections onto our model (in the $\ell_1$ norm), we use the above bound with the triangle inequality to have the following upper bound
\begin{equation}
\label{eqn:MEIHT_error0}
 \|{\bf x}^{n+1} - {\bf x}_{\mathcal{S}}\|_1 \leq \alpha\|{\bf x}^n - {\bf x}_{\mathcal{S}}\|_1 + (1-\alpha)\beta\|{\bf Ax}_{\bar{\mathcal{S}}} + {\bf e}\|_1 ,
\end{equation}
for some constants $\alpha<1$ and $\beta>0$ which depend only on $\epsilon_{\model_{3s}}$. This result relies on the fact that our models have the {\em nestedness property} discussed in Section \ref{sec:nested}. By induction, this bounds leads to \eqref{eqn:MEIHT_error}.

Corollary \ref{cor:MEIHT_conv} easily follows from Theorem \ref{thm:MEIHT_conv} by taking the limits of \eqref{eqn:MEIHT_error} as $n\rightarrow \infty$ and using a property of model RIP-1 matrices, which is also a property of the underlying expander graphs. We provide the detailed proofs of Theorem \ref{thm:MEIHT_conv} and Corollary \ref{cor:MEIHT_conv} in Section \ref{sec:proof}.

\subsection{Nestedness property of sparsity models} \label{sec:nested}

The RIP-1 requirement in traditional CS for the basic $k$-sparse case holds for sums and differences of sparse signals (Minkowski sums of signal supports). In model-based recovery, however, the Minkowski sums of sparsity models from a given model class (e.g., rooted connected tree models or group models) may not necessarily belong to the same model class and, as a result, model RIP-1 may not hold for such Minkowski sums \cite{baraniuk2010model}. For instance, the Minkowski sum, $\model_{k} \oplus \model_{k} \oplus \model_{k}$, of model orders of size $k$ may not be in the model $\model_{3k}$. Consequently, the model RIC of this Minkowski sum is not necessarily $\delta_{\model_{3k}}$ which is proportionalÊ to $\epsilon_{3k}$. However, if a given model has what we refer to as the {\em nestedness property}, defined below, the Minkowski sum, $\model_{k} \oplus \model_{k} \oplus \model_{k}$ is in the model class and the model RIC of the Minkowski sum is $\delta_{\model_{3k}}$.

In particular, the convergence proof of our algorithm relies on the fact that the models we consider are {\em nested}, in other words we require that Minkowski sums of signals in a  model remain in the model. This is equivalent to asking the model to have the {\em nestedness property} defined below which is similar to the {\em nested approximation property} (NAP) introduced in \cite{baraniuk2010model}.
\begin{definition}[Nestedness property]
\label{def:nested}
A model class $\model = \{ \model_1, \model_2, \ldots, \}$ has the {\em nestedness property} if, for any $\support \in \model_k$ and $\support' \in \model_{k'}$, with $\model_{k}, \model_{k'} \in \model$, we have $\support \cup \support' \in \model_{k+k'}$. 
\end{definition}
It is important to note that the models we consider in this work, rooted connected tree models and overlapping group models, have the nestedness property. In other words, the model-expanders expand on models of order $k,2k,3k$, and so on, simultaneously. For instance, the union of of the supports of two $\mathcal{T}_k$-sparse signals is the support of a $\mathcal{T}_{2k}$-sparse signal, similarly for $\GG_k$-group sparse signals.

Not all sparsity models have the nestedness property. An example is the dispersive model used to described neuronal spikes proposed in \cite{hegde2009compressive}, where the model-sparse signal is defined as a sparse signal whose nonzero coefficients must be separated by a fixed number of zero coefficients. Summing two such signals can yield a signal whose nonzero coefficients are not separated enough and hence does not belong to the desired sparsity model.

\subsection{Projections} \label{sec:project} 

The projections for the rooted connected tree model and the loopless overlapping groups model can be computed in linear time via dynamic programming, leveraging the results in \cite{baldassarre2013group} and \cite{cartis2013exact}, where the authors considered projections in the $\ell_2$ norm.

Given a general model $\mathcal{M}_k$, we define the projections in the $\ell_1$ norm as
\begin{equation}
\label{eq:model_proj}
\mathcal{P}_{\mathcal{M}_k}(\x) \in \argmin\limits_{{\bf z}: \supp({\bf z}) \in \mathcal{M}_k} \|{\bf x} - {\bf z}\|_1 .
\end{equation}
We can reformulate \eqref{eq:model_proj} as a discrete optimization problem over sets in $\mathcal{M}_k$. Indeed, we have
\begin{align}
\min_{{\bf z}: \supp({\bf z}) \in \mathcal{M}_k} \|{\bf x} - {\bf z}\|_1&  = \min_{\mathcal{S} \in \mathcal{M}_k} \min_{{\bf z} : \supp({\bf z}) \subseteq \mathcal{S}} \|{\bf x} - {\bf z}\|_1 \nonumber \\
& = \max_{\mathcal{S} \in \mathcal{M}_k} \max_{{\bf z} :  \supp({\bf z}) \subseteq \mathcal{S}} \|\x\|_1- \|{\bf x} - {\bf z}\|_1 \nonumber \\
& = \max_{\mathcal{S} \in \mathcal{M}_k} \|\x_\mathcal{S}\|_1 \label{eq:model_proj_discrete} .
\end{align}
Therefore, finding the projection in the $\ell_1$ norm of a signal $\x$ onto a model $\mathcal{M}_k$ amounts to finding a set in $\mathcal{M}_k$ that maximizes the ``cover" of $\x$ in absolute value. Tractable models $\mathcal{M}_k$ are models for which it is possible to solve \eqref{eq:model_proj_discrete} in polynomial time, leading to an overall polynomial time complexity for the proposed recovery algorithm, see Section \ref{sec:comp}.

Following \cite{baldassarre2013group}, we can show that projections in $\ell_1$ norm onto general group sparse models, which include the rooted connected tree model, can be reformulated as the combinatorial Weighted Maximum Coverage (WMC) problem. Since the WMC is NP-hard in general, these projections are NP-hard too. However, \cite{baldassarre2013group} identified group structures that lead to linear time algorithms. These structures correspond to the models we consider: loopless overlapping group models and rooted connected tree models.

For projections onto a group model $\GG_k$, we can reformulate \eqref{eq:model_proj_discrete} as
\begin{equation}
\label{eq:group_proj}
\max_{\scriptsize \begin{array}{c} \mathcal{S} \subseteq \GG\\ |\mathcal{S}| \leq k\\\mathcal{I} = \bigcup_{\G \in \mathcal{S}} \G\end{array}} \|{\bf x}_\mathcal{I}\|_1 \; .
\end{equation}
Let us now define the binary matrix ${\bf A}^\mathfrak{G} \in \mathbb{B}^{N \times M}$,
$$
{\bf A}^\mathfrak{G}_{ij} = \bigg \{ \begin{array}{lc} 1, & \text{if}~i \in \G_j; \\ 0, & \text{otherwise.} \end{array} \; 
$$
This matrix fully encodes the group structure and allows to rewrite \eqref{eq:group_proj} as the following Weighted Maximum Coverage problem
\begin{equation}
\label{eq:WMC}
\max\limits_{\boldsymbol{\omega} \in \mathbb{B}^\ngroups,~{\bf y} \in \mathbb{B}^\dim} \left \{ \sum_{i=1}^N y_i |x_i| : {\bf A}^\mathfrak{G} \boldsymbol{\omega} \geq {\bf y},  \sum_{j=1}^\ngroups \omega_j \leq k \right \},
\end{equation}
where $\boldsymbol{\omega}$ and $\bf y$ are binary variables that specify which groups and which variables are selected, respectively. The constraint $\A^\mathfrak{G} \boldsymbol{\omega} \geq {\bf y}$ makes sure that for every selected variable at least one group that contains it is selected, while $\sum_{j=1}^\ngroups \omega_j \leq k$ limits the number of selected groups to at most $k$. Note that the weights for the WMC are given by the absolute values of the coefficients of $\bf x$.

Two recently proposed dynamic programs \cite{baldassarre2013group, cartis2013exact} compute the projections in the $\ell_2$ norm for the loopless overlapping groups model and the rooted connected tree model.
The dynamic programs gradually explore a graph defined by the considered model and recursively compute the optimal solution.
In order to adapt the dynamic programs to projections in the $\ell_1$ norm, it is sufficient to replace the weights of the nodes of the graphs by using the absolute values of the components of ${\bf x}$ instead of the square value.

For the $k$-tree model, the graph is the given tree where each node correspond to a variable and has weight equal to the variable absolute value.
The tree is explored from the leaves upward, updating at each node a table containing the optimal selection of $1$ to $k$ nodes from its subtrees.
More specifically, consider being at a particular level in the tree. For each node in this level, we store a table that contains the optimal values for choosing $1$ to $k$ connected elements from its subtree. When we move one level up, the table for a node in the new level is computed by comparing the tables of its subtrees, considering one subtree at a time, from right to left.
The computational complexity is $\bigO(ND^2k)$, where $D$ is the tree degree ($D = 2$ for binary trees).

For the case of loopless overlapping groups, \cite{baldassarre2013group} considered the group-graph induced by the group structure, for which a node corresponds to an entire group of variables.
The weight associated to each node is evaluated dynamically during the exploration of the group-graph: it is the sum of the weights of the variables included in that node that do not belong to an already selected group.
By defining an appropriate graph exploration rule, the dynamic program has time complexity equal to $\bigO(M^2k + N)$.

\begin{remark}[Generalization]
In \cite{baldassarre2013group}, the overlapping groups model has been generalized to within-group sparsity with an overall sparsity budget $K$, together with the group budget $k$.
A dynamic program solves the projection \eqref{eq:model_proj} exactly in $\bigO(M^2K^2k + N\log K)$ operations for this model.
\end{remark}

\section{Existence of model-expanders} \label{sec:mdlexpanders}

For practical applications of model expanders we need to be able to construct these objects. The key goal is to construct them with as small parameters ($d$ and $m$) as possible. Bounds, both lower and upper, on $m$ for model-expanders, for binary tree-sparse and block-sparse models, are derived in \cite{indyk2013model}. The derivation of the lower bounds depended mainly on RIP-1, since the sparsification technique they employed relies on the RIP-1 of these matrices. Therefore, the extension of these two models to $D$-ary tree-sparse and fixed overlapping tree-sparse models respectively, which we consider in this manuscript, also have these lower bounds. We skip the explicit derivation because this will be identical to the derivation in \cite{indyk2013model}.

However, since the upper bounds involve the enumeration of the cardinality of the models which is different from \cite{indyk2013model}, we explicitly show the derivations below. In essence, we will show the existence of model expanders by random (probabilistic) construction. Precisely, for every vertex, $i$, in the set of left vertices $[N]$, i.e. $i\in [N]$, we sample with replacement $d$ vertices in the set of right vertices, $[m]$ and then we connect these $d$ nodes to $i$. Each $d$-subset of $[m]$ are sampled uniformly and independently of other sets. This leads to a standard tail bound whose proof uses a Chernoff bound argument. However, in our derivation we will use the following variant of this standard tail bound introduced and proved \cite{buhrman2002bitvectors}.

\begin{lemma}{\cite{buhrman2002bitvectors}}
\label{lem:mdl_existence}
There exist constants $C>1$ and $\mu > 0$ such that, whenever $m \geq Cdt/\epsilon$, for any $T\subseteq [N]$ with $|T|=t$ one has
\begin{equation*}
\label{eqn:mdl_existence}
\hbox{Prob}\left[|\left\{j\in [m] ~\left| ~\exists  i \in T, e_{ij} \in \EE  \right\}\right| < (1-\epsilon)dt\right] \leq \left(\mu \cdot \frac{\epsilon m}{dt}\right)^{-\epsilon dt}.
\end{equation*}
\end{lemma}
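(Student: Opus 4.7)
The plan is to use a direct union-bound argument on the number of ``colliding'' edges emanating from $T$. Since the graph is left-regular with left-degree $d$, the set $T$ with $|T|=t$ emanates exactly $dt$ edges, sampled independently and uniformly from $[m]$. Write $N_T := |\{j \in [m] \mid \exists i \in T,\ e_{ij}\in \EE\}|$ for the quantity appearing in the lemma, and let $\mathrm{mult}(v)$ denote the number of edges from $T$ that land at $v$. Then $dt - N_T = \sum_{v\in [m]} \max\{\mathrm{mult}(v)-1,\,0\}$, so the event $\{N_T < (1-\epsilon)dt\}$ is exactly $\{\text{there are at least } s\text{ surplus edges}\}$, where $s := \lceil \epsilon dt\rceil$ and an edge is called \emph{surplus} if, in a fixed ordering of the $dt$ edges from $T$, some earlier edge already used its right endpoint.

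To bound the latter event, note that when it occurs, the set $S$ of the first $s$ surplus edges (in the fixed order) is a witness: for every $e\in S$, the earliest edge hitting its right endpoint is by definition non-surplus, hence lies outside $S$. Thus every edge of $S$ falls on a right vertex already used by some edge in the complement. By union bound over the $\binom{dt}{s}$ choices of the witness, it suffices to bound, for a single $S$, the probability that each of the $s$ edges in $S$ lands on one of the right endpoints covered by the remaining $dt-s$ edges. Conditioning on the endpoints of the edges outside $S$ leaves at most $dt-s \leq dt$ admissible targets, and by independence of the samples each of the $s$ factors contributes at most $dt/m$. Combining with $\binom{dt}{s}\leq (e\,dt/s)^s$ yields
\[
\Pr\!\bigl[N_T < (1-\epsilon)dt\bigr] \;\leq\; \binom{dt}{s}\Bigl(\tfrac{dt}{m}\Bigr)^{\!s} \;\leq\; \Bigl(\tfrac{e\,(dt)^2}{s\,m}\Bigr)^{\!s}.
\]

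Substituting $s\geq \epsilon dt$ in the base and rewriting turns the right-hand side into $\bigl(e\,dt/(\epsilon m)\bigr)^{s} = \bigl(e^{-1}\cdot \epsilon m/(dt)\bigr)^{-s}$, which identifies the constant $\mu := 1/e$. The hypothesis $m \geq Cdt/\epsilon$ with any $C > e$ makes the base $e\,dt/(\epsilon m)$ strictly smaller than $1$; since $s\geq \epsilon dt$ and any $x\in(0,1)$ satisfies $x^s \leq x^{\epsilon dt}$, the exponent can then be safely replaced by $\epsilon dt$, producing the stated form $(\mu\cdot \epsilon m/(dt))^{-\epsilon dt}$.

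The single delicate point, and therefore the main obstacle, is the non-integrality of $\epsilon dt$: the exponent must be rounded \emph{up} to form the witness size $s$, while the bound on the right must be written with exponent $\epsilon dt$. This exchange is legitimate only because the base of the geometric bound is already less than $1$, which is exactly what the hypothesis $m\geq Cdt/\epsilon$ (with $C$ chosen slightly larger than $e$) enforces; this is what drives the precise value of the constant $C$. Everything else is a routine first-moment union-bound estimate.
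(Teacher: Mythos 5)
Your argument is correct. Note, however, that the paper does not prove this lemma at all: it is imported verbatim from Buhrman, Miltersen, Radhakrishnan and Venkatesh \cite{buhrman2002bitvectors}, with the surrounding text only remarking that it is a variant of a standard Chernoff-type tail bound for the random construction in which each left vertex draws its $d$ neighbours uniformly with replacement. Your witness/union-bound derivation is essentially the standard first-moment proof of this fact (the same calculation that underlies classical probabilistic constructions of lossless expanders): the event $|\Gamma(T)|<(1-\epsilon)dt$ forces at least $s=\lceil \epsilon dt\rceil$ ``repeat'' edges, each repeat lands in a set of at most $dt$ occupied right vertices with probability at most $dt/m$ by independence, and $\binom{dt}{s}\le (e\,dt/s)^s$ gives $\bigl(e\,dt/(\epsilon m)\bigr)^{s}$, which you correctly convert to the stated form with $\mu=1/e$ and any $C>e$, using $m\ge Cdt/\epsilon$ to make the base less than one before relaxing the exponent from $s$ to $\epsilon dt$. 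Two harmless nitpicks: the event $\{|\Gamma(T)|<(1-\epsilon)dt\}$ is only \emph{contained in} (not equal to) $\{\text{at least } s \text{ surplus edges}\}$ when $\epsilon dt$ is an integer, which is all you need for the upper bound; and your conditioning step could be phrased even more simply by noting that the surplus count is stochastically dominated by a $\mathrm{Binomial}(dt,\,dt/m)$ variable. Compared with the paper, your proof buys a self-contained justification with explicit constants, and it is sharper than a plain Chernoff bound in that it recovers the polynomially decaying base $\bigl(\mu\,\epsilon m/(dt)\bigr)^{-\epsilon dt}$, which is exactly what the union bounds over $\mathcal{T}_{k,t}$ and $\GG_{k,t}$ in Theorems \ref{thm:mdlexpander_rc} and \ref{thm:mdlexpander_lg} require.
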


\subsection{$D$-ary tree model}
$D$-ary tree model-expanders or $\mathcal{T}_k^{\epsilon}$-RIP-1 matrices are $m\times N$ sparse binary matrices with $d$ ones per column. The relation between the $[N]$ indices of the columns of these matrices is modeled by a $D$-ary tree. Theorem \ref{thm:mdlexpander_rc} states the existence and the sizes of the parameters of these matrices.
\begin{theorem}
\label{thm:mdlexpander_rc}
For $\epsilon \in (0,1/2)$ and $k=\omega\left(\log N\right)$ there exists a $\mathcal{T}_k^{\epsilon}$-RIP-1 matrix with
\begin{equation}
\label{eqn:mdlx_param}
d = \bigO\left(\frac{\log\left(N/k\right)}{\epsilon\log\log\left(N/k\right)}\right) \quad and \quad m = \bigO\left(\frac{dk}{\epsilon}\right).
\end{equation}
\end{theorem}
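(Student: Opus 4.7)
The plan is to construct the desired model-expander probabilistically, following the template of \cite{indyk2013model} but generalised from binary to $D$-ary trees. We sample a random left-regular bipartite graph by choosing, for each left vertex $i \in [N]$, $d$ right-neighbours uniformly and independently (with replacement) from $[m]$, and show that with positive probability the resulting graph is a $\mathcal{T}_k^{\epsilon}$-RIP-1 matrix. Since a single subset succeeds with the probability controlled by Lemma~\ref{lem:mdl_existence}, the whole task reduces to a careful union bound over all $\mathcal{T}_k$-sparse supports.

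For a fixed subset $\support \subset [N]$ of size $t \leq k$, Lemma~\ref{lem:mdl_existence} bounds the probability that $|\Gamma(\support)| < (1-\epsilon)dt$ by $(\mu \epsilon m/(dt))^{-\epsilon dt}$, provided $m \geq Cdt/\epsilon$. Targeting $m = \Theta(dk/\epsilon)$ automatically satisfies this hypothesis for every $t \leq k$ and simplifies the failure probability to $(\mu' k/t)^{-\epsilon d t}$ for an absolute constant $\mu' > 0$. Next, we enumerate the $\mathcal{T}_k$-sparse subsets of size $t$ using two competing bounds: the trivial bound $\binom{N}{t} \leq (eN/t)^t$, and a tree-specific bound. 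For the latter, the number of rooted connected subtrees of a $D$-ary tree of size exactly $s$ is controlled by the Fuss--Catalan number $\tfrac{1}{(D-1)s+1}\binom{Ds}{s} \leq (eD)^s$ (which specialises to the Catalan bound $4^s$ used in \cite{indyk2013model} when $D=2$); pairing each $k$-rooted connected subtree with its $t$-subsets yields $(eD)^k \binom{k}{t}$.

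The envelope $\min\bigl\{ \binom{N}{t},\, (eD)^k\binom{k}{t}\bigr\}$ has its crossover at $t^\star \sim k/\log(N/k)$: below $t^\star$ the generic count $\binom{N}{t}$ is smaller, while above $t^\star$ the Fuss--Catalan count is tighter. Splitting the union bound at $t^\star$ and imposing that each regime contributes less than, say, $1/(2k)$, the large-$t$ regime only forces an $O(1/\epsilon)$ constraint on $d$, whereas the small-$t$ regime imposes $\epsilon d \gtrsim \log(eN/t)/\log(\mu' k/t)$; both pinch at $t = t^\star$, where $k/t^\star \sim \log(N/k)$ and the ratio evaluates to $\log(N/k)/\log\log(N/k)$. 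Under the assumption $k = \omega(\log N)$ this is the worst case across all $t$, yielding $d = O\bigl(\log(N/k)/(\epsilon\log\log(N/k))\bigr)$ and $m = \Theta(dk/\epsilon)$ as advertised. Total failure probability is then strictly less than one, so a graph with the required properties exists.

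The main obstacle is the two-regime combinatorial bookkeeping: pinning down the crossover $t^\star$ at which the Fuss--Catalan count overtakes the generic $\binom{N}{t}$ count, verifying that the envelope of $t$-indexed constraints on $d$ is indeed maximised near $t^\star$, and confirming that the resulting rate matches $\Theta(\log(N/k)/(\epsilon\log\log(N/k)))$ precisely under the regime $k = \omega(\log N)$. The remaining ingredients --- that a random graph constructed by independent sampling is left-$d$-regular by construction, and that existence follows from probability strictly less than one --- are standard.
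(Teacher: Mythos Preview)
Your proposal is correct and follows essentially the same route as the paper's proof: probabilistic construction, the tail bound of Lemma~\ref{lem:mdl_existence}, the two competing counts $\min\{\binom{N}{t},\, T_k\binom{k}{t}\}$ with $T_k$ the Fuss--Catalan number, and a split of the union bound at the crossover $t^\star \sim k/\log(N/k)$. The one technical shortcut the paper uses that you may find helpful for the ``main obstacle'' you flag is that the left-hand sides of the per-$t$ constraints are log-convex in $t$ whenever $d > 1/\epsilon$, so it suffices to check only the three endpoints $t = 1,\, t^\star,\, k$ rather than optimise over the whole envelope.
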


\begin{proof}
For a model $\mathcal{T}_k \subseteq \Sigma_k$ and $t\in [k]$ denote $\mathcal{T}_k$-sparse sets of size $t$ as $\mathcal{T}_{k,t}$ and denote the number of $D$-ary rooted connected trees with $k$ nodes as $T_k$. We use the following estimate of the number of sets $\mathcal{T}_{k,t}$.
\begin{lemma}
\label{lem:num_trees}
For all $t\in [k]$, we have
\begin{equation*}
\left| \mathcal{T}_{k,t}\right| \leq \min \left[ T_k \cdot \binom{k}{t},\binom{N}{t}\right] , \quad \mbox{where} \quad T_k = \frac{1}{(D-1)k+1}\binom{Dk}{k}.
\end{equation*}
\end{lemma}
\begin{proof} It is sufficient to know that the $T_k$  are the Pfaff-Fuss-Catalan numbers or $k$-Raney numbers \cite[Note 12]{lang2009combinatorial}, which enumerates the total number of ordered, rooted $D$-ary trees of cardinality $k$.
\end{proof}

Using the fact that $\binom{x}{y}$ is bounded above by $\left(\frac{ex}{y}\right)^y$, we have that for all $t\in [k]$, $\left|\mathcal{T}_{k,t}\right|$ is upper bounded by
\begin{equation}
\label{eqn:num_trees}
\min \left[ \frac{(eD)^k}{(D-1)k+1} \left(\frac{ek}{t}\right)^t,\left(\frac{eN}{t}\right)^t\right].
\end{equation}
Taking a union bound over the number of $\mathcal{T}_{k,t}$ we see that the probability of Lemma \ref{lem:mdl_existence} goes to zero with $N$ if
$$\forall t\in[k] \quad  \left| \mathcal{T}_{k,t} \right| \cdot \left(\mu \cdot \frac{\epsilon m}{dt}\right)^{-\epsilon dt} \leq \frac{1}{N}, $$
where $m$ satisfies the bound in the lemma. Let us define $t^* := \frac{k\log(eD) - \log(Dk - D + 1)}{\log\left(N/k\right)}$. By simple algebra we see that at $t = t^*$ the two quantities in the square brackets of \eqref{eqn:num_trees} are equal; and $\forall t\in \left[1,t^*\right]$ the second quantity is smaller while $\forall t\in \left[t^*,k\right]$  the first quantity is smaller. Therefore, we split the domain of $t$ into two intervals and prove that $\forall t\in[1,t^*] $
\begin{equation}
\label{eqn:split_intervals1}
\left(\frac{eN}{t}\right)^t \left(\mu \cdot \frac{\epsilon m}{dt}\right)^{-\epsilon dt} \leq \frac{1}{N},
\end{equation}
and that $\forall t\in[t^*,k]$
\begin{equation}
\label{eqn:split_intervals2}
\frac{(eD)^k}{(D-1)k+1} \left(\frac{ek}{t}\right)^t \left(\mu \cdot \frac{\epsilon m}{dt}\right)^{-\epsilon dt} \leq \frac{1}{N}.
\end{equation}
In accordance with Theorem \ref{thm:mdlexpander_rc}, we let the left degree $d = C'\log\left(N/k\right)/\left[\epsilon\log\log\left(N/k\right)\right]$ and $m = C''dk/\epsilon$. Note that the left hand side of Inequalities \eqref{eqn:split_intervals1} and \eqref{eqn:split_intervals2} are log-convex whenever $d>1/\epsilon$. We therefore need to check the conditions only at the end points of the intervals, i.e. for $t = 1, t^*, k$. For $t=1$, Inequality \eqref{eqn:split_intervals1} becomes
$$eN\left(C''\mu k\right)^{- C'\log\left(N/k\right)/\log\log\left(N/k\right)} \leq 1/N. $$
This inequality holds for sufficiently large $C'$ and $C''$ given that $k = \omega(\log N)$. Thus \eqref{eqn:split_intervals1} holds for $t = 1$.
When $t= t^*$, we have the left hand side of inequality \eqref{eqn:split_intervals1} equal to the left hand side of inequality \eqref{eqn:split_intervals2}. Substituting the value of $t^*$ and those of $m$ and $d$ in the left hand side of \eqref{eqn:split_intervals1} gives the product of the following two terms.
\begin{equation*}
\left(\frac{eN\log\left(N/k\right)}{\eta}\right)^{\frac{k\log(eD) - \log(Dk - D + 1)}{\log\left(N/k\right)}}
\times \left(\frac{C''\mu k \log\left(N/k\right)}{\eta}\right)^{-C' \frac{k\log(eD) - \log(Dk - D + 1)}{\log\log\left(N/k\right)}} ,
\end{equation*}
where $\eta =  k\log(eD) - \log(Dk - D + 1)$.
The above quantity is less than $1/N$ for sufficiently large $C'$ and $C''$ given that $k = \omega(\log N)$. Thus \eqref{eqn:split_intervals1} and \eqref{eqn:split_intervals2} hold for $t = t^*$.
For $t = k$, with the values of $m$ and $d$, the left hand side of \eqref{eqn:split_intervals1} becomes
$$\frac{(e^2D)^k}{(D-1)k+1} \left(C''\mu\right)^{- C'k \log\left(N/k\right)/\log\log\left(N/k\right)}$$
This quantity is far less than $1/N$ for sufficiently large $C'$ and $C''$ given that $k = \omega(\log N)$. Thus \eqref{eqn:split_intervals2} hold for $t = k$ and this concludes the proof.
\end{proof}

\subsection{Overlapping group model}
Fixed overlapping group model-expanders or $\GG_k^{\epsilon}$-RIP-1 matrices are also $m\times N$ sparse binary matrices with $d$ ones per column and the relationship between the indices of their columns is modeled by an overlapping group structure. Theorem \ref{thm:mdlexpander_lg} states the existence and the sizes of the parameters of these matrices.
\begin{theorem}
\label{thm:mdlexpander_lg}
Let the total number of groups of the model be $M$ and the maximum size of groups be $g_{\max}$. For any $\epsilon \in (0,1/2)$, if $M\geq 2$, $g_{\max} =\omega\left(\log N\right)$, and $N > kg_{\max}$, then there exist a $\GG_k^{\epsilon}$-RIP-1 matrix, $\AA \in \RR^{m\times N}$ with $d$ nonzero entries per column, such that
\begin{equation}
\label{eqn:mdlx_param_lg}
d = \bigO\left(\frac{\log(N)}{\epsilon \log\left(kg_{\max}\right)}\right) \quad \mbox{and} \quad
m = \bigO\left(\frac{dkg_{\max}}{\epsilon}\right).
\end{equation}
\end{theorem}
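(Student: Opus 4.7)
The plan is to mirror the probabilistic construction used in Theorem \ref{thm:mdlexpander_rc}. I build the bipartite graph $G=([N],[m],\EE)$ by independently sampling, for each left vertex $i\in[N]$, a multiset of $d$ right vertices from $[m]$ and connecting them to $i$. By Lemma \ref{lem:mdl_existence}, for any fixed set $T\subseteq[N]$ of size $t$, the expansion fails on $T$ with probability at most $(\mu\epsilon m/(dt))^{-\epsilon d t}$. A union bound over all $\GG_k$-sparse sets then shows that the resulting matrix is a $\GG_k^\epsilon$-RIP-1 matrix with probability going to $1$, provided that for every $t\in[k]$ we have
\begin{equation}
\label{eqn:lg_union}
\left|\GG_{k,t}\right|\cdot\left(\mu\cdot\frac{\epsilon m}{dt}\right)^{-\epsilon dt}\leq \frac{1}{N},
\end{equation}
where $\GG_{k,t}$ denotes the collection of $\GG_k$-sparse sets of size exactly $t$.

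Next, I bound $|\GG_{k,t}|$. Every $\GG_k$-sparse set of size $t$ is a size-$t$ subset of a union of at most $k$ groups of $\GG$. Since the total number of groups is $M$ and each group has at most $g_{\max}$ elements, there are at most $\binom{M}{k}$ choices of groups and at most $\binom{kg_{\max}}{t}$ subsets of size $t$ within any such union. Combining this with the trivial bound $\binom{N}{t}$ and using $\binom{x}{y}\leq(ex/y)^y$, I obtain
\begin{equation}
\label{eqn:lg_count}
\left|\GG_{k,t}\right|\leq \min\!\left[\left(\frac{eM}{k}\right)^{k}\!\left(\frac{ekg_{\max}}{t}\right)^{t},\;\left(\frac{eN}{t}\right)^{t}\right].
\end{equation}
As in the tree case, I define $t^*$ to be the value at which the two bounds in \eqref{eqn:lg_count} coincide; solving gives $t^*\asymp k\log(eM/k)/\log(N/(kg_{\max}))$, and for $t\in[1,t^*]$ the second bound is tighter while for $t\in[t^*,k]$ the first is. Using log-convexity of the logarithm of the left-hand side of \eqref{eqn:lg_union} in $t$ (valid once $d>1/\epsilon$), it is enough to verify \eqref{eqn:lg_union} at the three endpoints $t=1,\,t^*,\,k$.

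Finally, I plug in the claimed values $d=C'\log(N)/[\epsilon\log(kg_{\max})]$ and $m=C''dkg_{\max}/\epsilon$ and check each endpoint. For $t=1$ the inequality becomes $eN\cdot(C''\mu kg_{\max})^{-C'\log(N)/\log(kg_{\max})}\leq 1/N$, which holds for $C',C''$ large enough since the exponent grows like $\log(N)$. For $t=k$ it reduces, after simplification using $M\leq N$, to showing that
$\bigl(e^{2}M g_{\max}/k\bigr)^{k}\cdot(C''\mu)^{-C'k\log(N)/\log(kg_{\max})}\leq 1/N$, which holds for large $C',C''$ provided $g_{\max}=\omega(\log N)$ (so that $\log(kg_{\max})$ dominates $\log\log(N)$) and $N>kg_{\max}$. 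At $t=t^*$ the two sides of \eqref{eqn:lg_count} agree, and substituting the explicit form of $t^*$ yields a bound of the same type as the $t=k$ case, again absorbed by the choice of $C',C''$. The main obstacle in this outline is the bookkeeping at the $t=t^*$ and $t=k$ endpoints: one has to check that the polynomial factor $(eMg_{\max}/k)^{k}$ coming from the size of the group model is suppressed by the $\epsilon dt\log(kg_{\max})$ term in the exponent, and this is exactly where the assumptions $g_{\max}=\omega(\log N)$ and $N>kg_{\max}$ are used to give enough slack for the constants to absorb the residual terms.
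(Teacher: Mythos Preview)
Your approach mirrors the paper's almost exactly: same probabilistic construction, same tail bound from Lemma \ref{lem:mdl_existence}, same two-term minimum for $|\GG_{k,t}|$, same splitting at $t^*$, and the same endpoint checking via log-convexity. So the strategy is correct.

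There is, however, a genuine slip in your parametrization of $t$. A $\GG_k$-sparse set is a subset of a union of $k$ groups, each of size at most $g_{\max}$, so its cardinality ranges over $t\in[kg_{\max}]$, not $t\in[k]$. The paper states the union bound as ``$\forall\, t\in[kg_{\max}]$'' and splits into $[1,t^*]$ and $[t^*,kg_{\max}]$; you write $t\in[k]$ throughout and check the upper endpoint at $t=k$. This is not a typo but a consistent misreading of the model order $k$ (number of groups) for the sparsity level (number of nonzero coordinates). As written, your union bound leaves all sets of size $k+1,\ldots,kg_{\max}$ unchecked. The fix is purely mechanical: replace $k$ by $kg_{\max}$ in the range of $t$ and in the upper endpoint, and redo the endpoint calculation there. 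At $t=kg_{\max}$ the tail factor becomes $(C''\mu)^{-\epsilon d\,kg_{\max}}$ while the model-count term is $(eM/k)^{k}e^{kg_{\max}}$; the assumption $N>kg_{\max}$ ensures $\log(kg_{\max})<\log N$, so the exponent $\epsilon d\,kg_{\max}=C'kg_{\max}\log(N)/\log(kg_{\max})$ dominates both $kg_{\max}$ and $k\log(eM/k)$, and the inequality goes through for large $C',C''$.

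A smaller point: in your $t=k$ computation (which becomes moot once you move to $t=kg_{\max}$) the base of the tail term should be $C''\mu g_{\max}$, not $C''\mu$, since $\epsilon m/(dt)=C''kg_{\max}/t$.
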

Intuitively, we see that the condition $N>kg_{\max}$ is necessary in order to not have just one large group; while the condition $g_{\max} = \omega(\log N)$ means that $g_{\max}$, the maximum size of groups, cannot be too small.

\begin{proof}
Let $\GG_{k,t}$ denote $\GG_k$-sparse sets with sparsity $t$. We use the following estimate of the number of sets $\GG_{k,t}$. Then we have $\forall ~t\in [kg_{\max}]$
\begin{equation}
\label{eqn:num_groupsets}
\left| \GG_{k,t}\right| \leq \min \left [\binom{M}{k}\binom{k g_{\max}}{t}, \binom{N}{t} \right].
\end{equation}

Since $\binom{x}{y}$ is bounded above by $\left(\frac{ex}{y}\right)^y$, we have for all $t\in [kg_{\max}]$
\begin{equation}
\left| \GG_{k,t}\right| \leq \min \left [\left(\frac{eM}{k}\right)^k \left(\frac{ek g_{\max}}{t}\right)^t, \left(\frac{eN}{t}\right)^t \right] .
\end{equation}

Using a union bound over the sets $\GG_{k,t}$, we see that the probability of Lemma \ref{lem:mdl_existence} goes to zero with $N$ if
$$
\forall t \in[kg_{\max}], \quad  \left|  \GG_{k,t} \right| \cdot \left(\mu \cdot \frac{\epsilon m}{dt}\right)^{-\epsilon dt} \leq \frac{1}{N},
$$
where $m$ satisfies the bound in the lemma.
By simple algebra we see that at $t = t^* = \frac{k\log\left(eM/k\right)}{\log\left(N/(kg_{\max})\right)}$ the two quantities in the square brackets of \eqref{eqn:num_groupsets} are equal; and $\forall t\in \left[1,t^*\right]$ the second quantity is smaller, while $\forall t\in \left[t^*,kg_{\max}\right]$ the first quantity is smaller. Therefore, we split the domain of $t$ into two intervals and prove that $\forall t\in[1,t^*]$
\begin{equation}
\label{eqn:split_intervals1_group}
\left(\frac{eN}{t}\right)^t \left(\mu \cdot \frac{\epsilon m}{dt}\right)^{-\epsilon dt} \leq \frac{1}{N},
\end{equation}
and that $\forall t\in[t^*,kg_{\max}]$
\begin{equation}
\label{eqn:split_intervals2_group}
\left(\frac{eM}{k}\right)^k \left(\frac{ekg_{\max}}{t}\right)^t \left(\mu \cdot \frac{\epsilon m}{dt}\right)^{-\epsilon dt} \leq \frac{1}{N}.
\end{equation}
In accordance with Theorem \ref{thm:mdlexpander_lg}, we let the left degree $d = C'\log\left(N\right)/\left[\epsilon\log\left(kg_{\max}\right)\right]$ and $m = C''dkg_{\max}/\epsilon$. The left hand side of Inequalities \eqref{eqn:split_intervals1_group} and \eqref{eqn:split_intervals2_group} are log-convex whenever $d>1/\epsilon$. We therefore need to check the conditions only at the end points of the intervals, i.e. for $t = 1, t^*, kg_{\max}$. For $t=1$, Inequality \eqref{eqn:split_intervals1_group} becomes
$$eN\left(C''\mu kg_{\max}\right)^{- C'\log\left(N\right)/\log\left(kg_{\max}\right)} \leq 1/N. $$
This is true for sufficiently large $C'$ and $C''$ given that $g_{\max} = \omega(\log N)$. Thus \eqref{eqn:split_intervals1_group} holds for $t = 1$.
When $t= t^*$, we have the left hand side of inequality \eqref{eqn:split_intervals1_group} equal to the left hand side of inequality \eqref{eqn:split_intervals2_group}. Substituting $t^*$ for $t$ and the values of $m$ and $d$ in the left hand side of \eqref{eqn:split_intervals1_group} leads to
$$\left(\frac{eN}{t^*}\right)^{t^*}\left(\frac{C''\mu kg_{\max}}{t^*}\right)^{- C't^*\log\left(N\right)/\log\left(kg_{\max}\right)} \leq \frac{1}{N}. $$
The above inequality is true for sufficiently large $C'$ and $C''$ given that $g_{\max} = \omega(\log N)$. Thus \eqref{eqn:split_intervals1_group} and \eqref{eqn:split_intervals2_group} hold for $t = t^*$.
For $t = k$, with the values of $m$ and $d$, the left hand side of \eqref{eqn:split_intervals1_group} becomes
$$\left(\frac{e^2Mg_{\max}}{k}\right)^k \left(C''\mu g_{\max}\right)^{-C'k\log(N)/\log\left(kg_{\max}\right)} \leq \frac{1}{N}.$$
This is also true for sufficiently large $C'$ and $C''$ given that $g_{\max} = \omega(\log N)$. Thus \eqref{eqn:split_intervals2_group} hold for $t = k$ and this concludes the proof.

The extra condition on $N$ in the theorem can be explicitly derived if we impose that $t^*>0$ which leads to $N>kg_{\max}$. But this condition is implicitly assumed in the above derivation.
\end{proof}

\begin{remark}
The block structured model is a special case of the overlapping group model, when the groups do not overlap. Usually in this setup the blocks are assumed to be of the same length which is equivalent to setting $g_1 = g_2 = \cdots = g_M = N/M =: g$. The above analysis on overlapping groups carries through to this particular case in a very straight forward way. The fixed overlapping group model is another special case of the more general overlapping groups model proven above.
\end{remark}

\section{Experimental Results} \label{sec:emperics}


\subsection{Block sparsity}
In the first experiment, we consider recovering a block-sparse signal, where the $M$ equal-sized blocks define a partition of the set $\{1, \ldots, N\}$, so that the block size is given by $g = N/M$.

We fix the number of active groups, $k = 5$, and let the group size $g$ grow as $\log(N)$ to satisfy the assumptions of our bound for group-sparse recover \eqref{eqn:mdlx_param_lg}. We also have that the number of groups $M$ must grow as $N/\log(N)$.
Specifically, we set $N \in \{2^7, 2^8, \ldots, 2^{13}\}$, $M = \lfloor N/\log_2(N) \rfloor$, and $g = \lfloor N/M \rfloor$.
We vary the number of samples $m$ and compute the relative recovery error $\|\widehat{\x} - \x\|_1/\|\x\|_1$, where $\widehat{\x}$ is the signal estimated either with the EIHT or the MEIHT algorithms and $\x$ is the true signal.
For each problem size $N$, we repeat the experiment $50$ times. For each repetition, we randomly select $k$ blocks from $M$ and set the components of the true signal $\x$ as identical and independent draws from a standard Gaussian distribution.
The other components are set to zero.
For each number of samples, we randomly draw a sparse sketching matrix $\AA \in \Real^{m \times N}$ with $d=\lfloor 2\log(N)/\log(kg) \rfloor$ ones per column in order to satisfy the condition in \eqref{eqn:mdlx_param_lg}.
The columns of $\AA$ are normalized to have unitary $\ell_1$ norm.
We then create the measurement vector as $\obs = \AA \x$, which we use to recover $\x$ using either the EIHT or the MEIHT algorithm.
We record the minimum number of samples $m^*$ that yields a median relative recovery error below $10^{-5}$.

In the top panel of Figure \ref{fig:block}, we plot $m^*$ as a function of $\log_2(N)$, showing strong empirical evidence for our theoretical results.
Indeed, in the setting of this experiment, our bound predicts that $m^*$ for MEIHT should scale as $\log(N)^2/\log(C_1\log(N))$, where $C_1$ is a positive constant.
Instead the bound for standard expanders indicates that $m^*$ for EIHT should scale as $\log(N)\log(N/C_2\log(N))$, where $C_2$ is a positive constant.
The plot shows that empirically the minimum number of samples for good recovery grows faster for EIHT than for MEIHT.

\begin{figure}[h!]
\centering
\includegraphics[width=0.46\textwidth]{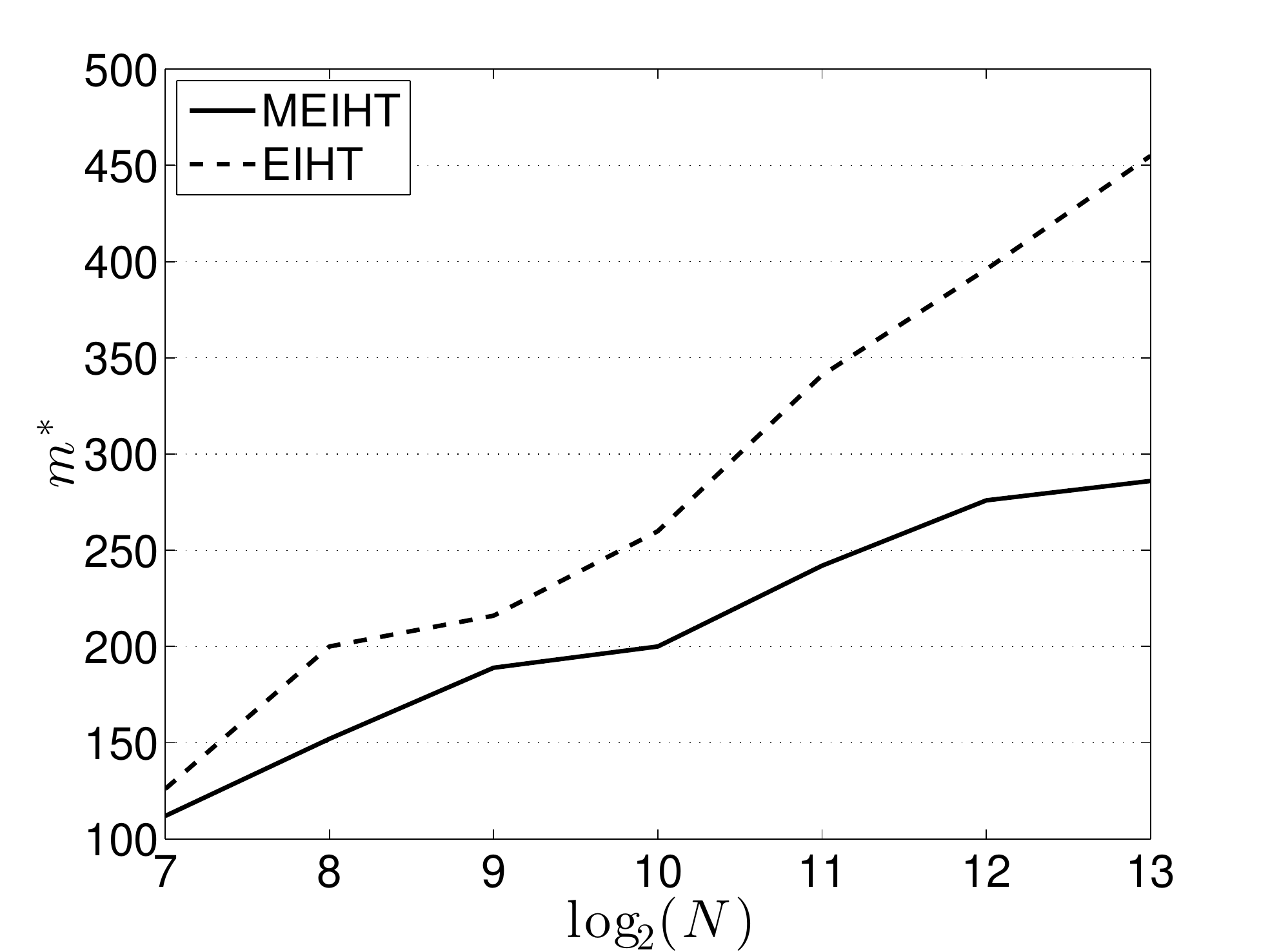}
\includegraphics[width=0.46\textwidth]{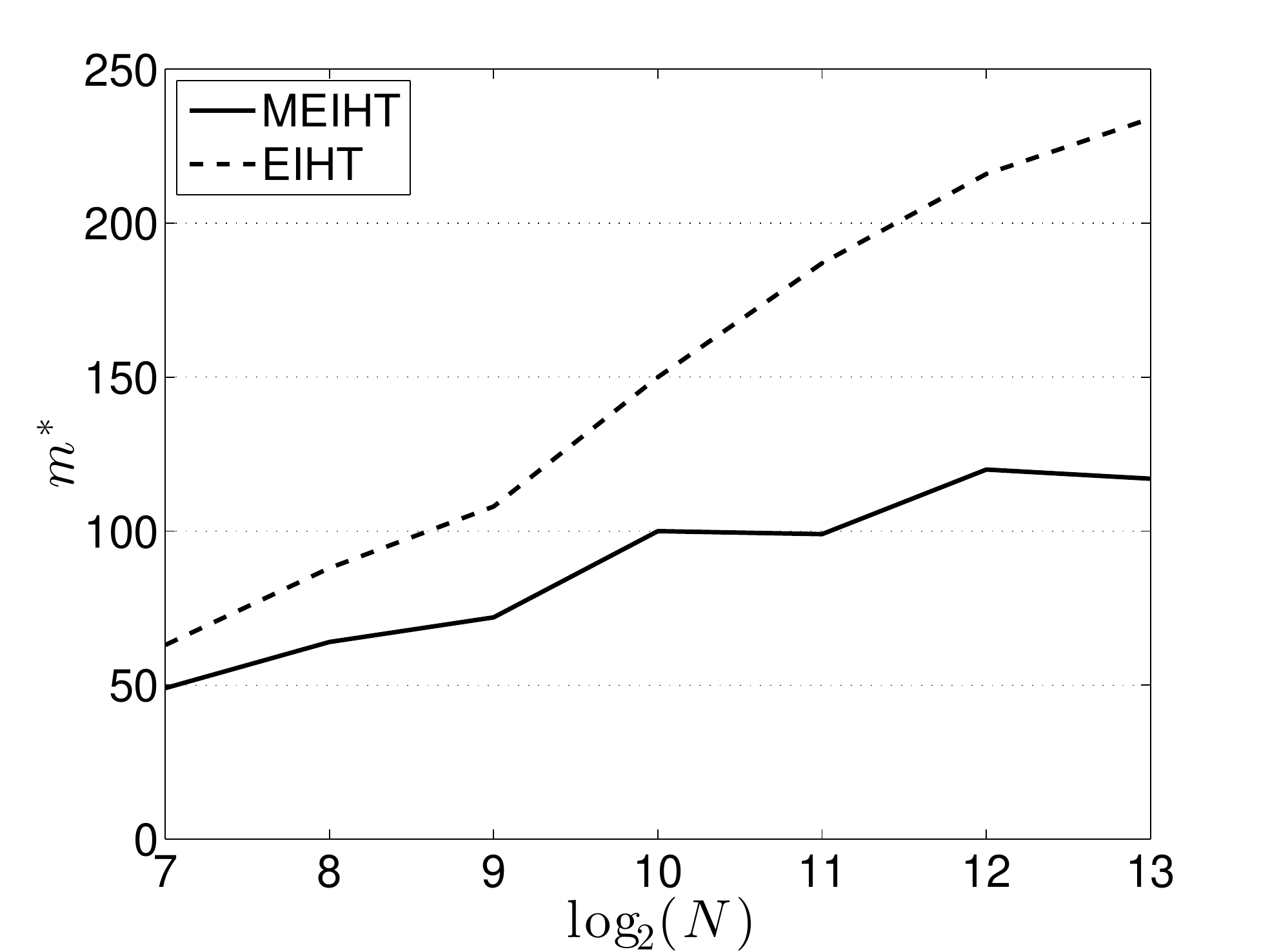}
\caption{\label{fig:block} Model-based signal recovery example. {\em Left panel:} block-sparse model. The original signal is block-sparse (see text for details on how it is generated). The plot shows the minimum number of samples required to reach a median relative recovery error in the $\ell_1$ norm below $10^{-5}$ for the EIHT and the MEIHT algorithms as the signal ambient dimension is increased. {\em Right panel:} rooted connected subtree model. The original signal is sparse and its support is a rooted connected subtree. The plot shows the minimum number of samples required to reach a median relative recovery error in the $\ell_1$ norm below $10^{-5}$ for the EIHT and MEIHT algorithms as the signal ambient dimension is increased. Note that the x-axis in both plots reports the logarithm of the ambient dimension.}
\end{figure} 

\subsection{Tree sparsity}
\label{sec:exp_tree}
In the second experiment, we try to recover a signal $\x \in \Real^N$ whose nonzero coefficients lie in a rooted connected subtree of a regular binary tree.

We set the number of nonzero coefficients $k = \lfloor 2\log_2(N) \rfloor$ as required by Theorem \ref{thm:mdlexpander_rc}, while their values are randomly drawn from a standard Gaussian distribution.
We randomly draw a sparse sketching matrix $\AA \in \Real^{m \times N}$ with $d= \lfloor 2.5 \frac{\log\left(N/k\right)}{\log\log\left(N/k\right)}\rfloor$ ones per column in order to satisfy the condition in \eqref{eqn:mdlx_param}.
The columns of $\AA$ are normalized to have unitary $\ell_1$ norm.
We use the EIHT and MEIHT algorithms to obtain an approximation $\widehat{\x}$ from the linear sketch $\AA \x$.
For each value of $N \in \{2^7, 2^8, \ldots, 2^{13}\}$, we vary the sketch length, $m \in [2k,~10k\log_2N]$, and compute an approximation $\widehat{\x}$ with both algorithms, recording the relative recovery error $\|\widehat{\x} - \x\|_1/\|\x\|_1$.
We repeat the experiment $50$ times with a different draw of the signal $\x$ and compute the median of the relative recovery errors for both algorithms.
Finally, we record the minimum number of measurements, $m^*$, that yields a median relative recovery error less than $10^{-5}$.

The results are presented in the bottom panel of Figure \ref{fig:block} and show a strong accordance to the theoretical results of the previous section.
Indeed, in the setting of this experiment, our bound predicts that $m^*$ for MEIHT should scale as $\frac{\log(N)\log(N/(C\log(N)))}{\log\log(N/C\log(N))}$, where $C$ is a positive constant.
Instead, the bound for standard expanders indicates that $m^*$ for EIHT should scale as $\log(N)\log(\frac{N}{C\log(N)})$.
The plot shows that in practice the minimum number of samples for good recovery grows faster for EIHT than for MEIHT, matching the theoretical bounds.

\section{Conclusions} \label{sec:concln}

In this paper, we consider signals that belong to some general structured sparsity models, such as rooted connected trees or overlapping groups models. 
We focus on the problem of recovering such signals from noisy low-dimensional linear sketches obtained from sparse measurement matrices.
We present the first linear time recovery algorithm with $\ell_1/\ell_1$ recovery guarantees for the models we consider. 
Our results rely on: $(i)$ identifying a special class of sparse matrices that are the adjacency matrices of model-based expanders and which have the model $\ell_1$ norm restricted isometry property; $(ii)$ performing exact projections onto the sparsity models by adapting the dynamic programs presented in \cite{baldassarre2013group}. 
We show that by exploiting structured sparsity priors we are able to reduce the sketch length that guarantees recovery and our experimental results confirm these findings. In addition, we derive probabilistic constructions of a more general class of model expanders.

We conclude with a surprising result that reveals an interesting research question. We repeat the experiment in Section \ref{sec:exp_tree}, using sparse rooted connected tree signals with fixed sparsity, $k=16$, and sparse sketching matrices with a fixed number of ones per column (corresponding to $d=6$) in contrast to what required by the theory. Figure \ref{fig:surprise} shows that in this setting, the proposed algorithm seems to yield a constant sample complexity, comparable to the one predicted for dense sketching matrices. It may be possible that the better bound for dense sketching matrices may be achievable in the {\em for each} case also by sparse ones, bringing even greater efficiency advantages.

\begin{figure}[h]
\centering
\includegraphics[width=0.46\textwidth]{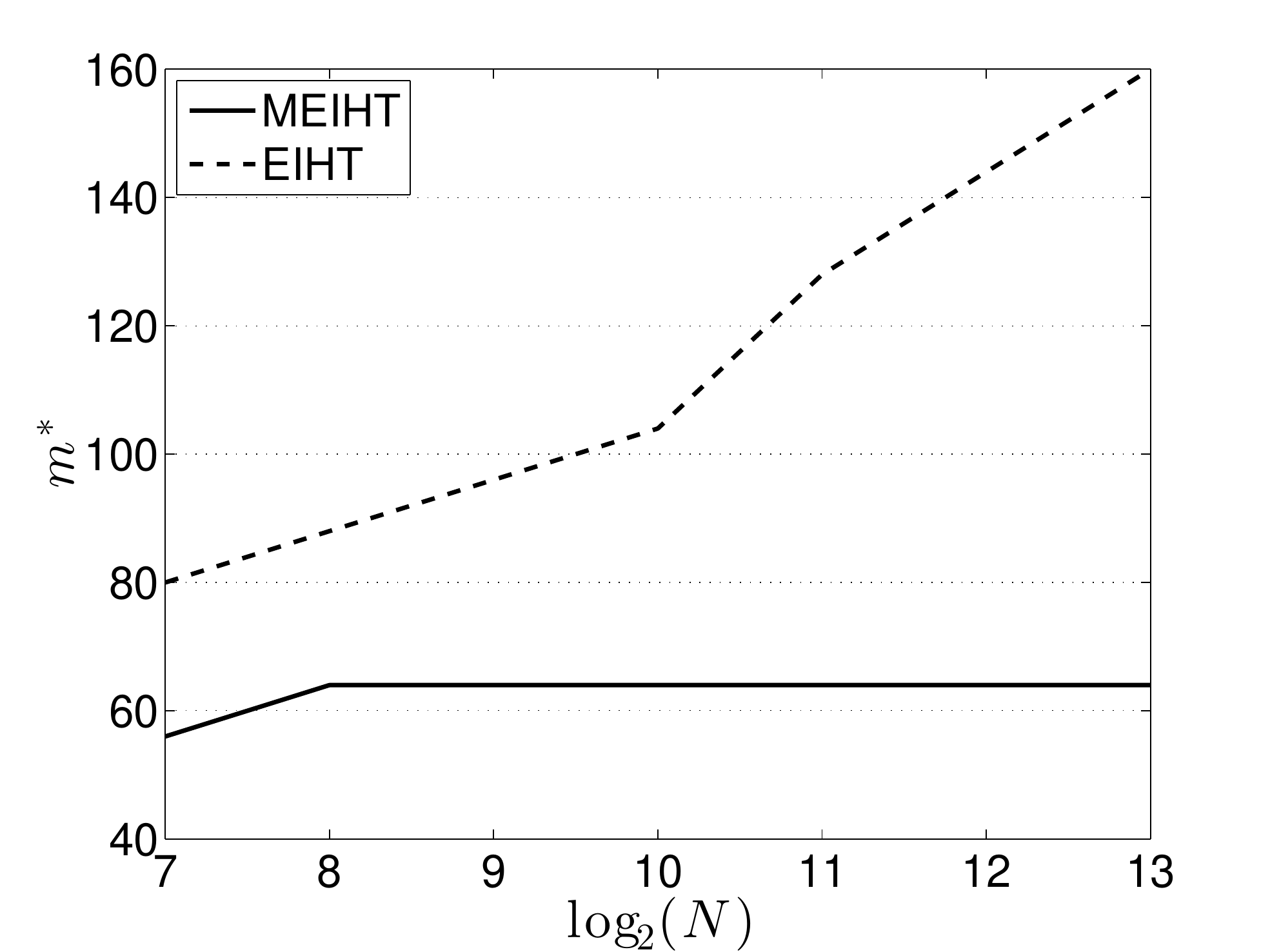}
\caption{\label{fig:surprise}A surprising result. Model-based signal recovery for the rooted connected tree model. The original signal is $k=16$ sparse and its support is a rooted connected subtree. The model expander is generated with $d = 6$ ones per column. The plot shows the minimum number of samples required to reach a median relative recovery error in the $\ell_1$ norm below $10^{-5}$ for the EIHT and MEIHT algorithms as the signal ambient dimension is increased. Note that the x-axis reports the logarithm of the ambient dimension.}
\end{figure}

\section{Proofs}\label{sec:proof}

Before we proceed with the proof of Theorem \ref{thm:MEIHT_conv}, we present an essential lemma. This lemma is similar to Lemma 13.16 in \cite{foucart2013mathematical} which holds only for the basic $k$-sparse model. Here, we generalize this lemma to cover the models we consider in this paper using similar proof techniques to that of \cite{foucart2013mathematical}.

We first start with additional notations and definitions that are required in this section. Recall that $\AA$ is the adjacency matrix of a $\left(k,d,\epsilon\right)$-model expander, $G = \left([N],[m],\mathcal{E}\right)$. Let $\support \subseteq [N]$, then $\Gamma'(\support)$ refers to the set of vertices connected to exactly one vertex in $\support$, while $\Gamma''(\support)$ refers to the set of vertices connected to more than one vertex in $\support$. Note that $\Gamma''(\support)$ is referred to as the {\em collision set} in the RIP-1 literature. The variable $e_{ij}$ indicates the edge from vertex $i$ to vertex $j$ while $e_i$ denotes the $i^{\mathrm{th}}$ component of the vector ${\bf e}$.

\subsection{Key lemma for the proof of Theorem \ref{thm:MEIHT_conv}}

Now, we state the lemma which is the key ingredient to the proof of Theorem \ref{thm:MEIHT_conv}. This lemma is about the sub-problem of our algorithm involving the median operation and it shows that the median operator approximately inverts the action of ${\bf A}$ on sparse vectors, i.e., it behaves more-or-less like the adjoint of $\AA$. For clarity, we provide the proof only for the $k$-tree sparse model, $\mathcal{T}_k$. The proof for the $k$-group  sparse model, $\mathfrak{G}_k$, easily follows {\em mutatis mutandis}.
\begin{lemma}
\label{lem:median}
Let ${\bf A} \in \{0,1\}^{m\times N}$ be the adjacency matrix of a model expander with an expansion coefficient of order $k$, $\epsilon_{\model_{k}}$, satisfying $4\epsilon_{\model_{k}} d < d + 1$. If $\mathcal{S} \subseteq [N]$ is $\mathcal{M}_k$-sparse, then for all ${\bf x} \in \RR^N$ and ${\bf e} \in \RR^m$
\begin{equation}
\label{eqn:lem_median}
\|\left[ \mathcal{M} \left({\bf Ax}_{\mathcal{S}} + {\bf e}\right) - {\bf x} \right]_{\mathcal{S}}\|_1 
 \leq \frac{4\epsilon_{\model_{k}}}{1-4\epsilon_{\model_{k}}}\|{\bf x}_{\mathcal{S}}\|_1 + \frac{2}{(1-4\epsilon_{\model_{k}}) d}\|{\bf e}_{\Gamma(\mathcal{S})}\|_1.
\end{equation}
\end{lemma}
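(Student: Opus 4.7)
Proof plan: The lemma is the model-sparse analogue of Foucart--Rauhut's Lemma 13.16 (the plain $k$-sparse case), and the strategy of that proof adapts almost verbatim once the right expansion ingredient is identified. The key observation is that under Definition \ref{def:mdlexpander} the bound $|\Gamma(T)|\geq (1-\epsilon_{\model_k})d|T|$ is available for \emph{every} subset $T\subseteq \mathcal{S}$, not just for $\mathcal{S}$ itself: since $\mathcal{S}\subseteq \mathcal{K}$ for some $\mathcal{K}\in \model_k$, any $T\subseteq \mathcal{S}$ also lies in $\mathcal{K}$, so the model-expander property applies to $T$ directly. Consequently the collision bound $|\Gamma''(T)|\leq \epsilon_{\model_k} d |T|$ holds for every such $T$, which is all the standard expander-based proof uses about the graph. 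This subset-closure is exactly what breaks in naive structured models and is built into the model-expander definition, so that once the $\Sigma_k$ proof is written in terms of it the extension to $\mathcal{M}_k$ is transparent; in particular the $\mathfrak{G}_k$ case follows from the $\mathcal{T}_k$ argument \emph{mutatis mutandis}.

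First I would rewrite the residual in a form tailored to median estimates. Using that $\AA\in\{0,1\}^{m\times N}$ and that $i$ contributes the constant $x_i$ to $(\AA\x_{\mathcal{S}})_j$ for every $j\in \Gamma(i)$, pull $x_i$ out of the median to write, for $i\in \mathcal{S}$,
\[
\tilde r_i := [\mathcal{M}(\AA\x_{\mathcal{S}}+\mathbf{e})-\x]_i = \operatorname{median}\bigl\{Z_j^{(i)} : j\in \Gamma(i)\bigr\}, \quad Z_j^{(i)}:=\!\!\sum_{k\in \Gamma^{-1}(j)\cap \mathcal{S}\setminus\{i\}}\!\! x_k + e_j.
\]
Note $Z_j^{(i)} = e_j$ whenever $j\in \Gamma'(\mathcal{S})$, while for $j\in \Gamma''(\mathcal{S})$ the term $Z_j^{(i)}$ also absorbs collisions with other active coordinates. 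Next I would invoke the elementary median inequality: $|\operatorname{median}(y_1,\ldots,y_d)|\geq \tau$ forces at least $\lceil (d+1)/2\rceil$ of the $|y_\ell|$ to exceed $\tau$. Applied to the $Z_j^{(i)}$'s on the level set $\mathcal{S}_\tau := \{i\in \mathcal{S} : |\tilde r_i|\geq \tau\}$, this gives $\#\{j\in \Gamma(i):|Z_j^{(i)}|\geq \tau\}\geq (d+1)/2$ for every $i\in \mathcal{S}_\tau$, so that the set $P_\tau:=\{(i,j):i\in \mathcal{S}_\tau,\, j\in\Gamma(i),\, |Z_j^{(i)}|\geq \tau\}$ has cardinality at least $\tfrac{d+1}{2}|\mathcal{S}_\tau|$.

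The main step, and the main obstacle, is the matching upper bound on $|P_\tau|$. I would split the contribution by whether $j\in \Gamma'(\mathcal{S}_\tau)$ or $j\in \Gamma''(\mathcal{S}_\tau)$ and, on each piece, apply the triangle inequality $|Z_j^{(i)}|\leq |e_j|+\sum_{k\in T_j\setminus\{i\}}|x_k|$ with $T_j:=\Gamma^{-1}(j)\cap \mathcal{S}$ to force either $|e_j|\geq \tau/2$ or $\sum_{k\in T_j\setminus\{i\}}|x_k|\geq \tau/2$. The collision-driven contribution is supported on $\Gamma''(\mathcal{S})$ and, by the subset-expansion observation applied to $\mathcal{S}_\tau$, the number of edges from $\mathcal{S}_\tau$ into $\Gamma''(\mathcal{S}_\tau)$ is at most $2\epsilon_{\model_k} d |\mathcal{S}_\tau|$. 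Finally, using $\sum_{i\in \mathcal{S}}|\tilde r_i|=\int_0^\infty |\mathcal{S}_\tau|\,d\tau$ I would integrate the two-way inequality over $\tau$: the noise channel integrates to a multiple of $\|\mathbf{e}_{\Gamma(\mathcal{S})}\|_1/(d+1)$ (Markov-style), while the collision channel integrates to $2\epsilon_{\model_k}\bigl(\|\x_{\mathcal{S}}\|_1 + \sum_{i\in \mathcal{S}}|\tilde r_i|\bigr)$, the self-referential piece appearing because collisions involve coordinates $k\in \mathcal{S}\setminus \mathcal{S}_\tau$ whose residuals $|\tilde r_k|$ are themselves controlled by the same sum. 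Moving that term to the left and invoking the hypothesis $4\epsilon_{\model_k} d < d+1$ to keep the coefficient $(d+1-4\epsilon_{\model_k} d)/2$ positive, one divides through and recovers \eqref{eqn:lem_median}. The hardest piece of bookkeeping is checking that the collision contribution really scales like $\epsilon_{\model_k}$ (rather than picking up an extra factor of $|\mathcal{S}|$); this is exactly where subset-expansion is used both to bound $|\Gamma''(\mathcal{S}_\tau)|$ and to bound the total edge weight from $\mathcal{S}_\tau$ into it.
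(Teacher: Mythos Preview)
Your opening observation is exactly right and is the structural heart of the extension: because Definition~\ref{def:mdlexpander} requires expansion for every $\support\subseteq\K$ with $\K\in\model_k$, every subset of an $\model_k$-sparse set enjoys the collision bound $|\Gamma''(T)|\le \epsilon_{\model_k}d|T|$, and this is all the $\Sigma_k$ proof ever uses. The paper's proof agrees with you here.

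Where you part ways with the paper is in the mechanism for assembling these per-subset bounds into \eqref{eqn:lem_median}. The paper does \emph{not} use a layer-cake integration over level sets $\mathcal{S}_\tau$. It argues by induction on $|\mathcal{S}|$: by pigeonhole (Proposition~\ref{pro:unique_nbr}) there is an $i^*\in\mathcal{S}$ with at least $(1-2\epsilon_{\model_k})d$ unique neighbours, hence at most $2\epsilon_{\model_k}d$ collisions. One bounds the $i=i^*$ summand directly, and for $\mathcal R=\mathcal{S}\setminus\{i^*\}$ one applies the induction hypothesis after absorbing the cross-talk from $i^*$ into a modified noise $e'_j=\mathbb{I}_{\{e_{i^*j}\in\mathcal{E}\}}x_{i^*}+e_j$. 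The extra mass this adds to $\|\noise'_{\Gamma(\mathcal R)}\|_1$ is at most $2\epsilon_{\model_k}d\,|x_{i^*}|$, precisely because $i^*$ was chosen to have few collisions.

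Your layer-cake route has a genuine gap at the step where you assert the ``collision channel integrates to $2\epsilon_{\model_k}(\|\x_{\mathcal S}\|_1+\sum_i|\tilde r_i|)$''. The self-referential piece is fine: integrating the $2\epsilon_{\model_k}d|\mathcal S_\tau|$ bound on edges into $\Gamma''(\mathcal S_\tau)$ gives $2\epsilon_{\model_k}d\sum_i|\tilde r_i|$. The problem is the other half. Edges $(i,j)$ with $j\in\Gamma'(\mathcal S_\tau)$ but $j\in\Gamma''(\mathcal S)$ carry signal collisions from $T_j\setminus\{i\}\subseteq\mathcal S\setminus\mathcal S_\tau$, and these are what must produce the $\epsilon_{\model_k}\|\x_{\mathcal S}\|_1$ term. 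Tracking them through the layer-cake, the contribution unfolds into a weighted sum of the form $\sum_{k\in\mathcal S}|x_k|\cdot|\Gamma(k)\cap\Gamma''(\mathcal S)|$. Expansion on $\mathcal S$ only controls the \emph{total} edge count into $\Gamma''(\mathcal S)$, i.e.\ $\sum_k |\Gamma(k)\cap\Gamma''(\mathcal S)|\le 2\epsilon_{\model_k}d|\mathcal S|$, not the individual weights $|\Gamma(k)\cap\Gamma''(\mathcal S)|$; without a per-vertex bound these can be as large as $d$, and the weighted sum can be of order $d\|\x_{\mathcal S}\|_1$ rather than $\epsilon_{\model_k}d\|\x_{\mathcal S}\|_1$. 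The paper's induction sidesteps exactly this: at each step it \emph{chooses} the vertex $i^*$ for which $|\Gamma(i^*)\cap\Gamma''(\mathcal S)|\le 2\epsilon_{\model_k}d$ is guaranteed, pays $2\epsilon_{\model_k}d|x_{i^*}|$ for it, and recurses. Your level-set argument treats all vertices of $\mathcal S_\tau$ symmetrically and therefore cannot access this per-vertex control; as written, the collision bookkeeping does not close.
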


The proof of this lemma requires the following proposition which is a property of $d$-left regular bipartite graphs like $(k,d,\epsilon)$-lossless and $(k,d,\epsilon)$-model expander graphs, which is equivalent to the RIP-1 condition \cite{berinde2008combining}. Remember that a $d$-left regular bipartite graph is a bipartite graph with each left vertex having degree $d$.
\begin{proposition}
\label{pro:unique_nbr}
Given a $d$-left regular bipartite graph with expansion coefficient $\epsilon$, if $\support$ is a set of $s$ left indices, then the set, $\Gm'(\support)$ of right vertices connected to exactly one left vertex in $\support$ has size $$|\Gm'(\support)| \geq (1-2\epsilon)ds.$$
\end{proposition}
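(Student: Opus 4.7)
The plan is to prove this by a standard double-counting argument on the edges leaving $\support$, combined with the expansion hypothesis. Since the graph is $d$-left regular, the total number of edges with one endpoint in $\support$ is exactly $ds$. Every such edge lands either in $\Gamma'(\support)$ (right vertices with exactly one neighbor in $\support$) or in $\Gamma''(\support)$ (right vertices with at least two neighbors in $\support$), and these two sets partition $\Gamma(\support)$.

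The first step is to write the edge count: vertices in $\Gamma'(\support)$ each receive exactly one edge from $\support$, and vertices in $\Gamma''(\support)$ each receive at least two. This yields
\[
 ds \;\geq\; |\Gamma'(\support)| \;+\; 2|\Gamma''(\support)|.
\]
The second step is to invoke the expansion property $|\Gamma(\support)| = |\Gamma'(\support)| + |\Gamma''(\support)| \geq (1-\epsilon)ds$, which gives a lower bound on $|\Gamma''(\support)|$ in terms of $|\Gamma'(\support)|$, namely $|\Gamma''(\support)| \geq (1-\epsilon)ds - |\Gamma'(\support)|$. Substituting this into the edge-count inequality and solving for $|\Gamma'(\support)|$ will produce
\[
 ds \;\geq\; |\Gamma'(\support)| + 2\bigl((1-\epsilon)ds - |\Gamma'(\support)|\bigr),
\]
and after simplification one obtains $|\Gamma'(\support)| \geq (1-2\epsilon)ds$, which is exactly the claim.

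There is really no substantial obstacle: the argument is purely combinatorial and relies only on $d$-left regularity (to count total edges as $ds$) and on the single application of the expansion hypothesis (to upper bound the ``wasted'' edges created by collisions). The only minor care needed is to keep track that every vertex in $\Gamma''(\support)$ absorbs at least two edges from $\support$, which is what converts an expansion statement about $|\Gamma(\support)|$ into a statement about the unique-neighbor set $\Gamma'(\support)$.
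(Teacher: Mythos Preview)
Your proof is correct and is essentially identical to the paper's own argument: both combine the edge-count inequality $|\Gamma'(\support)| + 2|\Gamma''(\support)| \leq ds$ with the expansion bound $|\Gamma'(\support)| + |\Gamma''(\support)| \geq (1-\epsilon)ds$ and eliminate $|\Gamma''(\support)|$. The only cosmetic difference is that the paper first isolates $|\Gamma''(\support)| \leq \epsilon ds$ and then substitutes, whereas you substitute directly; the algebra is the same.
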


\begin{proof} \textbf{Proof of Proposition \ref{pro:unique_nbr}}
By the expansion property of the graph we have $|\Gm(\support)| \geq (1-\epsilon)d|\support|$ and by the definition of the sets $\Gm'(\cdot)$ and $\Gm''(\cdot)$ we have $|\Gm(\support)| = |\Gm'(\support)| + |\Gm''(\support)|$ which therefore means
\begin{equation}
\label{eqn:num_nbrs1}
 |\Gm'(\support)| + |\Gm''(\support)| \geq (1-\epsilon)d|\support|.
\end{equation}
Now let the set of edges between $\support$ and $\Gm(\support)$ be denoted by $\mathcal{E}(\support,\Gm(\support))$. Hence $|\mathcal{E}(\support,\Gm(\support))|$ is the number of edges between $\support$ and $\Gm(\support)$ and by enumerating these edges we have
\begin{equation}
\label{eqn:num_nbrs2}
|\Gm'(\support)| + 2|\Gm''(\support)| \leq |\mathcal{E}(\support,\Gm(\support))| = d|\support|.
\end{equation}
Solving \eqref{eqn:num_nbrs1} and \eqref{eqn:num_nbrs2} simultaneously gives the bound $|\Gm''(\support)| \leq \epsilon d|\support|$ and thus $|\Gm'(\support)| \geq (1-2\epsilon)d|\support|$. Substituting $|\support|$ by $s$ completes the proof.
\end{proof}

\begin{proof}\textbf{Proof of Lemma \ref{lem:median}}
For a permutation $\pi:[m]\rightarrow [m]$ such that $b_{\pi(1)} \geq b_{\pi(2)} \geq \ldots \geq b_{\pi(m)}$, we can rewrite the median operation of \eqref{eqn:median} as
$$\left[\mathcal{M}({\bf u})\right]_i := q_{2}\left[u_j, j \in \Gamma(i) \right] ~~\mbox{for} ~~{\bf u}\in \RR^m ~~\mbox{and} ~~i\in [N]$$ where $q_2$ is a shorthand for the {\em second quantile (median) operator} that is $q_2\left[b_1,\ldots,b_m\right] = b_{\pi\left(\lceil m/2\rceil\right)}$.

Using this shorthand for the median, the left hand side of \eqref{eqn:lem_median} can be written in the following way:
\begin{align*}
\|\left[ \mathcal{M} \left({\bf Ax}_{\mathcal{S}} + {\bf e}\right) - {\bf x} \right]_{\support}\|_1 & = \sum_{i\in \support} \left|q_2 \left[ \left({\bf Ax}_{\support} + {\bf e}\right)_j, ~j\in \Gm(i) \right] - x_i\right| \\
& = \sum_{i\in \support} \left|q_2 \left[ \left({\bf Ax}_{\support}\right)_j + e_j - x_i, ~j\in \Gm(i) \right]\right|.
 \end{align*}
 The above can be bound as in \eqref{eqn:lmm_lhs2} below using the following property of the median operator, which is true of more general quantile operators:
$$\left|q_2\left[b_1,\ldots,b_m\right]\right| \leq q_2\left[\left|b_1\right|,\ldots,\left|b_m\right|\right].$$
Therefore, we have
 \begin{equation}
 \label{eqn:lmm_lhs2}
 \begin{split}
\|\left[ \mathcal{M} \left({\bf Ax}_{\mathcal{S}} + {\bf e}\right) - {\bf x} \right]_{\support}\|_1 & \leq \sum_{i\in \support}q_2 \left[\left| \left({\bf Ax}_{\support}\right)_j + e_j - x_i\right|, ~j\in \Gm(i) \right] \\
& = \sum_{i\in \support}q_2 \Big[\Big| \mathop{\sum_{l\in \support\backslash \{i\}}}_{e_{lj}\in \mathcal{E}} x_l + e_j\Big|, ~j\in \Gm(i) \Big],
\end{split}
\end{equation}
where the last equality is due to the property of the underlying expander graph of $\AA$, say $G$, and remember $e_{lj}\in \mathcal{E}$ denotes an edge from node $l$ to node $j$ being in $\mathcal{E}$, the set of edges of $G$. Now, we proceed by induction on the cardinality of $\support$ i.e., $s = |\support|$ to show that the right hand side of \eqref{eqn:lem_median} upper bounds the right hand side of \eqref{eqn:lmm_lhs2}.

{\bf Base case:} If $\support = \{i\}$ that is $|\support|=s=1$, then there is no $l \in \support \backslash \{i\}$. Using the following property of the median operator, which is also true for more general quantile operators:
\begin{equation}
\label{eqn:quantile2}
q_2\left[b_1,\ldots,b_m\right] \leq \frac{2\left(b_1+\cdots+b_m\right)}{m}, ~ \mbox{if}  ~ \forall j ~ b_j \geq 0,
\end{equation}
we have the following stronger bound.
\begin{equation}
\label{eqn:lmm_lhs_bnd1}
\sum_{i\in \support}q_2 \Big[\Big| \mathop{\sum_{l\in \support\backslash \{i\}}}_{e_{lj}\in \mathcal{E}} x_l + e_j\Big|, ~j\in \Gm(i) \Big] = q_2\left[\left|e_j\right|, j \in \Gm(i)\right] \leq \frac{2}{d}\|\noise_{\Gm(i)}\|_1.
\end{equation}

{\bf Inductive step:} We assume that the induction statement holds for $\support$ with $|\support| = s-1$ ($s\geq 2$) and proceed to show that it also holds for $\support$ with $|\support| = s$. By the pigeonhole principle, given a $\left(s,d,\epsilon_{\mathcal{T}_{s}}\right)$-model expander, Proposition \ref{pro:unique_nbr} implies that there exists a variable node $i^*\in \support$ which is connected uniquely to at least $(1-2\epsilon_{\mathcal{T}_{s}})d$ check nodes. We will refer to this node as the {\em unique neighbor} index. Any set $\support\in \mathcal{T}_s$-sparse (with $|\support| = s$) can be partitioned into $\{i^*\}$ and $\support\backslash\{i^*\}$. We will bound  \eqref{eqn:lmm_lhs2} for (a) when $i\in \{i^*\}$, that is $i=i^*$, and (b) when $i\in \mathcal{R} :=\support\backslash\{i^*\}$. Summing the bounds from (a) and (b) will result in an upper bound on \eqref{eqn:lmm_lhs2} for the case $|\support| = s$ and this completes the proof.

(a) By the definition of $q_2$, there exist $\lceil d/2\rceil$ distinct $j_1,\ldots j_{\lceil d/2\rceil} \in \Gm(i^*)$ ($i^*$ is the unique neighbor node) such that, for all $h\in \left\{1,\ldots,\lceil d/2\rceil\right\}$,
\begin{equation}
\label{eqn:lmm_lhs_bnds1}
q_2 \Big[\Big| \mathop{\sum_{l\in \support\backslash \{i^*\}}}_{e_{lj}\in \mathcal{E}} x_l + e_j\Big|, ~j\in \Gm(i^*) \Big] \leq \Big| \mathop{\sum_{l\in \support\backslash \{i^*\}}}_{e_{lj_h}\in \mathcal{E}} x_l + e_{j_h}\Big|, ~j_h\in \Gm(i^*).
\end{equation}
Note that there are no $l\in \support \backslash \{i^*\}$ with $e_{lj_h}\in \mathcal{E}$ since $i^*$ is the unique neighbor index. Therefore, we can also bound \eqref{eqn:lmm_lhs_bnds1} by the mean over the $j_h$. Let us define $\gm := |\Gm'(i^*)| \geq (1-2\epsilon_{\mathcal{T}_{s}})d$. There are at most $d-\gm \leq 2\epsilon_{\mathcal{T}_{s}} d$ vertices in the collision set $\Gm''(i^*)$ and there are at least $\lceil d/2 \rceil - (d-\gm) \leq \lceil d/2 \rceil - 2\epsilon_{\mathcal{T}_{s}} d$ elements among the $j_h$ that are in $\Gm'(i^*)$. We use this fact to upper bound the mean over the $j_h$ elements which in turn upper bounds \eqref{eqn:lmm_lhs_bnds1} as follows
\begin{equation}
\label{eqn:lmm_lhs_bnds2}
\begin{split}
q_2 \Big[\Big| \mathop{\sum_{l\in \support\backslash \{i^*\}}}_{e_{lj}\in \mathcal{E}} x_l + e_j\Big|, ~j\in \Gm(i^*) \Big] & \leq \frac{1}{\lceil d/2 \rceil - (d-\gm)} \|\noise_{\Gm'(i^*)}\|_1  \\
& \leq \frac{2}{(1-4\epsilon_{\mathcal{T}_{s}})d} \|\noise_{\Gm'(i^*)}\|_1
\end{split}
\end{equation}

(b) If on the other hand $i\in\mathcal{R} := \support\backslash \{i^*\}$, firstly we rewrite $\displaystyle \Big| \mathop{\sum_{l\in \support\backslash \{i\}}}_{e_{lj}\in \mathcal{E}} x_l + e_{j}\Big|,  ~j\in \Gm(i^*)$ as
\begin{align}
\label{eqn:lmm_lhs_bnds3}
\Big| \mathop{\sum_{l\in \mathcal{R}\backslash \{i\}}}_{e_{lj}\in \mathcal{E}} x_l + \mathbb{I}_{\{e_{i^*j}\in \mathcal{E}\}}x_{i^*} + e_{j}\Big|,  ~j\in \Gm(i^*),
\end{align}
where $\mathbb{I}_{\{\cdot\}}$ is an indicator function defined as follows
\begin{equation*}
\mathbb{I}_{\{i\in\Omega\}} = \begin{cases} 1 &\mbox{if } i\in\Omega,\\
0 & \mbox{if } i\notin\Omega. \end{cases}
\end{equation*}
Since we assume the induction statement to be true for $\support$ with $|\support|=s-1$, we replace $\support$ by $\mathcal{R}$ and $e_j$ by $e'_j = \mathbb{I}_{\{e_{i^*j}\in \mathcal{E}\}}x_{i^*} + e_{j}$ and we use the fact that $\epsilon_{\mathcal{T}_{s-1}} \leq \epsilon_{\mathcal{T}_{s}}$ to get an upper bound from \eqref{eqn:lem_median} as follows:
\begin{equation}
\label{eqn:lmm_lhs_bnds4}
\sum_{i\in \mathcal{R}} q_2 \Big[\Big| \mathop{\sum_{l\in \mathcal{R}\backslash \{i\}}}_{e_{lj}\in \mathcal{E}} x_l + e_j\Big|, ~j\in \Gm(i) \Big] \leq \frac{4\epsilon_{\mathcal{T}_{s}} d}{d-4\epsilon_{\mathcal{T}_{s}} d}\|\x_{\mathcal{R}}\|_1 + \frac{2}{(1-4\epsilon_{\mathcal{T}_{s}})d}\|\noise'_{\Gm(\mathcal{R})}\|_1,
\end{equation}
where $\noise'$ is a vector with components $e'_j$ for $j = 1,\ldots,m$.
Next we upper bound $\|\noise'_{\Gm(\mathcal{R})}\|_1$ as follows
\begin{equation}
\label{eqn:lmm_lhs_bnds5}
\|\noise'_{\Gm(\mathcal{R})}\|_1 \leq \sum_{j\in \Gm(\mathcal{R})} \mathbb{I}_{\{e_{i^*j}\in \mathcal{E}\}} \left|x_{i^*}\right| + \|\noise_{\Gm(\mathcal{R})}\|_1 \leq 2\epsilon_{\mathcal{T}_{s}} d \left|x_{i^*}\right| + \|\noise_{\Gm(\mathcal{R})}\|_1,
\end{equation}
where we used the fact that
\begin{align*}
\sum_{j\in \Gm(\mathcal{R})} \mathbb{I}_{\{e_{i^*j}\in \mathcal{E}\}}  = \sum_{j\in \Gm''(i^*)} \mathbb{I}_{\{e_{i^*j}\in \mathcal{E}\}} = \left|\Gm''(i^*)\right| \leq 2\epsilon_{\mathcal{T}_{s}} d,
\end{align*}
which is the cardinality of the collision set deducible from Proposition \ref{pro:unique_nbr}.

We now bound the last expression in  \eqref{eqn:lmm_lhs2}, i.e., $\sum_{i\in \support}q_2 \Big[\Big| \mathop{\sum_{l\in \support\backslash \{i\}}}_{e_{lj}\in \mathcal{E}} x_l + e_j\Big|, ~j\in \Gm(i) \Big]$, which is the sum of the two cases (a) and (b) leading respectively to the bounds in \eqref{eqn:lmm_lhs_bnds2} and \eqref{eqn:lmm_lhs_bnds4}. This sum can thus be rewritten as follows
\begin{equation}
\label{eqn:lmm_lhs_bnds6}
q_2 \Big[\Big| \mathop{\sum_{l\in \support\backslash \{i^*\}}}_{e_{lj}\in \mathcal{E}} x_l + e_j\Big|, ~j\in \Gm(i^*) \Big]  + \sum_{i\in \mathcal{R}} q_2 \Big[\Big| \mathop{\sum_{l\in \support\backslash \{i\}}}_{e_{lj}\in \mathcal{E}} x_l + e_j\Big|, ~j\in \Gm(i) \Big].
\end{equation}
Now using the bounds in \eqref{eqn:lmm_lhs_bnds2} and \eqref{eqn:lmm_lhs_bnds4} with \eqref{eqn:lmm_lhs_bnds5}, we upper bound \eqref{eqn:lmm_lhs_bnds6} by
\begin{equation*}
\label{eqn:lmm_lhs_bnds7}
\frac{2}{(1-4\epsilon_{\mathcal{T}_{s}})d} \|\noise_{\Gm'(i^*)}\|_1 + \frac{4\epsilon_{\mathcal{T}_{s}} d}{d-4\epsilon_{\mathcal{T}_{s}} d}\|\x_{\mathcal{R}}\|_1
+ \frac{2}{(1-4\epsilon_{\mathcal{T}_{s}})d} \left(2\epsilon_{\mathcal{T}_{s}} d \left|x_{i^*}\right| + \|\noise_{\Gm(\mathcal{R})}\|_1\right),
\end{equation*}
which in turn can be bounded by
\begin{equation}
\label{eqn:lmm_lhs_bnds8}
\frac{4\epsilon_{\mathcal{T}_{s}}}{1-4\epsilon_{\mathcal{T}_{s}}}\|\x_{\support}\|_1 + \frac{2}{(1-4\epsilon_{\mathcal{T}_{s}})d} \|\noise_{\Gm(\support)}\|_1.
\end{equation}

The upper bound \eqref{eqn:lmm_lhs_bnds7} is due to the fact that $\Gm(\mathcal{R})$ and $\Gm'(i^*)$ are two disjoint subsets of $\Gm(\support)$ and this completes the proof.
 \end{proof}

 \subsection{Proof of Theorem \ref{thm:MEIHT_conv}}

\begin{proof} We use Lemma \ref{lem:median} to prove Theorem \ref{thm:MEIHT_conv}. In order to show that \eqref{eqn:MEIHT_error} holds, we show that \eqref{eqn:MEIHT_error1} below holds and by induction this leads to \eqref{eqn:MEIHT_error}.
\begin{equation}
\label{eqn:MEIHT_error1}
\|{\bf x}^{n+1} - {\bf x}_{\mathcal{S}}\|_1 \leq \alpha\|{\bf x}^n - {\bf x}_{\mathcal{S}}\|_1 + (1-\alpha)\beta\|{\bf Ax}_{\bar{\mathcal{S}}} + {\bf e}\|_1.
\end{equation}
Let us define $\mathcal{Q}^{n+1} := \support \cup \supp(\x^n) \cup \supp(\x^{n+1})$ and ${\bf u}^{n+1} := \left(\x^n + \median\left(\obs - \AA\x^n\right)\right)_{\mathcal{Q}^{n+1}}$. Since we are able to do exact projections (in the $\ell_1$ norm) onto the given model, then $\x^{n+1}$ is a better $\mathcal{M}_s$-sparse approximation to ${\bf u}^{n+1}$ than is $\x_{\support}$, i.e.,
$$\|{\bf u}^{n+1} - {\bf x}^{n+1}\|_1 \leq \|{\bf u}^{n+1} - {\bf x}_{\mathcal{S}}\|_1.$$
With this fact we use the triangle inequality to have
\begin{equation}
\label{eqn:MEIHT_error2}
\|{\bf x}^{n+1} - {\bf x}_{\mathcal{S}}\|_1 \leq \|{\bf x}^{n+1} - {\bf u}^{n+1}\|_1 + \|{\bf u}^{n+1} - {\bf x}_{\mathcal{S}}\|_1 \leq 2\|{\bf u}^{n+1} - {\bf x}_{\mathcal{S}}\|_1.
\end{equation}
We substitute the value of ${\bf u}^{n+1}$ and use the fact that $\support \subseteq \mathcal{Q}^{n+1}$ to upper bound \eqref{eqn:MEIHT_error2} by the following
\begin{equation}
\label{eqn:MEIHT_error3}
2\|\left[{\bf x}_{\mathcal{S}} - \x^n - \median\left(\AA\left(\x^n - \x_\support\right) + \bar{\noise}\right)\right]_{\mathcal{Q}^{n+1}}\|_1,
\end{equation}
where $\bar{\noise} := \AA\x_{\bar{\support}} + \noise$. 
Next we apply Lemma \ref{lem:median} to bound \eqref{eqn:MEIHT_error3} by
\begin{equation}
\label{eqn:MEIHT_error4}
\frac{8\epsilon_{\model_{3s}}}{1-4\epsilon_{\model_{3s}}}\|\x_\support - \x^n\|_1 + \frac{4}{\left(1 - 4\epsilon_{\model_{3s}}\right)d}\|\bar{\noise}\|_1,
\end{equation}
where we use $\model_{3s}$ because $\mathcal{Q}^{n+1} \in \model_{3s}$ due to the fact that our models have the {\em nestedness property} (see Section \ref{sec:nested}). This bound is the desired upper bound in \eqref{eqn:MEIHT_error1}, with $\alpha := \frac{8\epsilon_{\model_{3s}}}{\left(1-4\epsilon_{\model_{3s}}\right)}$ and $\beta := \frac{4}{\left(1 - 12\epsilon_{\model_{3s}}\right)d}$.

Then, we use induction on the bound in \eqref{eqn:MEIHT_error1}  to get the following bound
\begin{align*}
\label{eqn:MEIHT_error5}
\|{\bf x}^{n} - {\bf x}_{\mathcal{S}}\|_1 \leq \alpha^n\|{\bf x}^0 - {\bf x}_{\mathcal{S}}\|_1 + \sum_{k=0}^{n-1}\alpha^k (1-\alpha^n)\beta\|{\bf Ax}_{\bar{\mathcal{S}}} + {\bf e}\|_1.
\end{align*}
Simplifying the geometric series leads to \eqref{eqn:MEIHT_error}. To have $\alpha<1$ and $\beta>0$ we need $\epsilon_{\model_{3s}} < 1/12$. This concludes the proof.
\end{proof}

\subsection{Proof of Corollary \ref{cor:MEIHT_conv}}

\begin{proof}
Let the sequence $(\x^n)$ for $n\geq 0$ converges to $\widehat{\x}$. But, as $n\rightarrow \infty$, $\alpha^n \rightarrow 0$, the first term in \eqref{eqn:MEIHT_error} goes to zero, and hence, the term multiplying $\beta$ in the second terms becomes 1. Therefore, by the triangle inequality we have the following
$$
\|\widehat{\x} - {\bf x}_{\mathcal{S}}\|_1  \leq \beta\|\AA\x_{\bar{\support}}\|_1 +  \beta\|\noise\|_1.
$$
Note that $\|\AA\x_{\bar{\support}}\|_1 \leq d\|\x_{\bar{\support}}\|_1$ holds irrespective of $\x_{\bar{\support}}$ being sparse or not. This is a direct consequence of expanders and an indirect one of model RIP-1 matrices. Therefore, we bound our error as thus
\begin{align*}
\|\widehat{\x} - {\bf x}\|_1 \leq \|\x_{\bar{\support}}\|_1  +  \|\widehat{\x} - {\bf x}_\support\|_1 \leq (1+\beta d)\sigma_{\model_s}(\x)_1 + \beta\|\noise\|_1.
\end{align*}
Taking $C_1 = 1+\beta d$ and $C_2 = \beta$ completes the proof.
\end{proof}


\section{Appendix}\label{sec:appdx}

\subsection*{Proof of Lemma \ref{lem:median} for $\GG_s$-sparse models} \label{sec:approof}

Note that here $s$ is the model order and for $\mathcal{S}\in\GG_s$, $\|\x_{\mathcal{S}}\|_0\leq kg_{\max}$ where $k$ is the within group sparsity and the $g_{\max}$ is the maximum size of the groups. As in the proof for $\mathcal{T}_s$ models in Section \ref{sec:proof}, we will arrive at the bound in the lemma by bounding the right hand side of the following inequality, which is the same as \eqref{eqn:lmm_lhs2}, for all $\mathcal{S}\in\GG_s$.
 \begin{equation}
 \label{eqn:lmm_lhs2_mod}
\|\left[ \mathcal{M} \left({\bf Ax}_{\mathcal{S}} + {\bf e}\right) - {\bf x} \right]_{\support}\|_1 \leq \sum_{i\in \support}q_2 \Big[\Big| \mathop{\sum_{l\in \support\backslash \{i\}}}_{e_{lj}\in \mathcal{E}} x_l + e_j\Big|, ~j\in \Gm(i) \Big],
\end{equation}
Similar to the proof for $\mathcal{T}_s$ models we proceed in an inductive manner. 

{\bf Base case:} When $\mathcal{S}$ has model order $|\support| = s = 1$, the sparsity of $\mathcal{S} = g_{\max}$. In this case we just take the support as the generic $g_{\max}$-sparse vector and whose proof is equivalent to the of $\mathcal{T}_s$ models (with $s=g_{\max}$) in Section \ref{sec:proof} above.

{\bf Inductive step:} We assume that the induction statement holds for $\support$ with model order $|\support| = s-1$ ($s\geq 2$) which means $\x_{\mathcal{S}}$ has sparsity $(k-1)g_{\max}$. Then attempt to show that it also holds for $\support$ with model order $|\support| = s$ where $\x_{\mathcal{S}}$ has sparsity $kg_{\max}$. Note that the difference in the sparsity of a model order of $s-1$ and $s$ is $g_{\max}$ nodes not $1$ node. Let $\mathcal{S}'$ be the model of order $s-1$, $\mathcal{R}\subset \mathcal{S}$ and $i^*$ be a {\em unique neighbour} index. Then we split $\mathcal{S}$ into disjoint sets such that $\mathcal{S} = \mathcal{S}'\cup\mathcal{R}\cup\{i^*\}$, where $\x_{\mathcal{S}'}$, $\x_{\mathcal{R}}$ and $\x_{\{i^*\}}$ are all model sparse by Definition \ref{def:mdlsparse}. If instead $i^* \in \mathcal{S}'$ we will write $\mathcal{S} = \mathcal{S}''\cup\mathcal{R}\cup\{i^*\}$ where $\mathcal{S}' = \mathcal{S}''\cup\{i^*\}$ and the rest of the argument below goes through. Note that it is also possible to consider $\mathcal{S}$ as an $sg_{\max}$-sparse set and then do the induction from $sg_{\max}-1$-sparse to $sg_{\max}$-sparse sets because of the way our models are nested.

For $i=i^*$, using Proposition \ref{pro:unique_nbr} we bound  \eqref{eqn:lmm_lhs2_mod} by a bound on the mean over the cardinality of the collision set, $\Gm''(i^*)$, to get
\begin{equation}
\label{eqn:lmm_lhs_bnds2mod}
q_2 \Big[\Big| \mathop{\sum_{l\in \support\backslash \{i^*\}}}_{e_{lj}\in \mathcal{E}} x_l + e_j\Big|, ~j\in \Gm(i^*) \Big] \leq \frac{2}{(1-4\epsilon_{\GG_{s}})d} \|\noise_{\Gm'(i^*)}\|_1,
\end{equation}
where $\epsilon_{\GG_{s}}$ is an upper bound on the expansion coefficient of $\{i^*\}$.
For $\mathcal{S}'\cup\mathcal{R}$, we therefore have the following: 
\begin{equation}
\label{eqn:lmm_lhs_bndsSpr}
\sum_{i\in \mathcal{S}'\cup\mathcal{R}} q_2 \Big[\Big| \mathop{\sum_{l\in \mathcal{S}\backslash \{i\}}}_{e_{lj}\in \mathcal{E}} x_l + e_j\Big|, ~j\in \Gm(i) \Big] 
\leq\sum_{i\in \mathcal{S}'\cup\mathcal{R}} q_2 \Big[\Big| \mathop{\sum_{l\in \mathcal{S}'\cup\mathcal{R}\backslash \{i\}}}_{e_{lj}\in \mathcal{E}} x_l + e'_{j}\Big|,  ~j\in \Gm(i)\Big],
\end{equation}
where $e'_j = \mathbb{I}_{\{e_{i^*j}\in \mathcal{E}\}}x_{i^*} + e_{j}$. Now we use the fact that $\epsilon_{\GG_{s}}$ upper bounds the expansion coefficient of $\mathcal{S}'\cup\mathcal{R}$ combined with \eqref{eqn:lem_median} to get the following upper bound
\begin{equation}
\label{eqn:lmm_lhs_bndsSpr2}
\frac{4\epsilon_{\GG_{s}} d}{d-4\epsilon_{\GG_{s}} d}\|\x_{\mathcal{S}'\cup\mathcal{R}}\|_1 + \frac{2}{(1-4\epsilon_{\GG_{s}})d}\|\noise'_{\Gm(\mathcal{S}'\cup\mathcal{R})}\|_1.
\end{equation}
But $\|\noise'_{\Gm(\mathcal{S}'\cup\mathcal{R})}\|_1 \leq 2\epsilon_{\GG_{s}} d \left|x_{i^*}\right| + \|\noise_{\Gm(\mathcal{S}'\cup\mathcal{R})}\|_1$ and substituting this bound into the sum of \eqref{eqn:lmm_lhs_bnds2mod} and \eqref{eqn:lmm_lhs_bndsSpr2} simplifies to:
\begin{equation}
\label{eqn:lmm_lhs_bnds7}
\frac{4\epsilon_{\GG_{s}}}{1-4\epsilon_{\GG_{s}}}\|\x_{\support}\|_1 + \frac{2}{(1-4\epsilon_{\GG_{s}})d} \|\noise_{\Gm(\support)}\|_1.
\end{equation}

The upper bound \eqref{eqn:lmm_lhs_bnds7} is due to the fact that $\Gm(\mathcal{S}'\cup\mathcal{R})$ and $\Gm'(i^*)$ are two disjoint subsets of $\Gm(\support)$ and the separability of the $\ell_1$-norm. We also implicitly assume the nestedness property which our models satisfy. This completes the proof.

\end{document}